\documentclass{article}
\usepackage[english]{babel}
\usepackage[letterpaper,top=2cm,bottom=2cm,left=3cm,right=3cm,marginparwidth=1.75cm]{geometry}
\usepackage{todonotes} 
\usepackage{amsmath, amssymb, amsthm}
\usepackage{graphicx}
\usepackage{mathtools}
\usepackage[colorlinks=true, allcolors=blue]{hyperref}
\newcommand{\E}{\mathcal{E}}
\newcommand{\R}{\mathbb{R}}
\newcommand{\N}{\mathbb{N}}
\newcommand{\Rd}{\mathbb{R}^d}
\newcommand{\dx}{\mathrm{d}}

\newcommand{\A}{\mathcal{A}}
\newcommand{\B}{\mathcal{B}}

\usepackage{setspace}
\onehalfspacing

\DeclarePairedDelimiter{\norm}{\lVert}{\rVert}
\DeclarePairedDelimiter{\set}{\{}{\}}

\DeclarePairedDelimiter{\ceil}{\lceil}{\rceil}

\DeclarePairedDelimiter{\prt}{(}{)}
\DeclarePairedDelimiter{\brk}{[}{]}

\newtheorem{theorem}{Theorem}[section]
\newtheorem{lemma}[theorem]{Lemma}
\newtheorem{proposition}[theorem]{Proposition}
\newtheorem{corollary}[theorem]{Corollary}

\newtheorem{definition}[theorem]{\bf Definition}

\setlength{\marginparwidth}{2cm}
\setlength{\parindent}{0pt} 

\title{Existence of weak solutions for a volume-filling model of cell invasion into extracellular matrix}
\author{Rebecca M. Crossley\thanks{Mathematical Institute, University of Oxford, Oxford OX2 6GG, UK. (rebecca.crossley@maths.ox.ac.uk).}\and Jan-Frederik Pietschmann${^\dagger},$\thanks{Institute of Mathematics, Universit\"{a}t Augsburg, Institut f\"ur Mathematik, Universit\"{a}tsstra\ss e 12a, 86159 Augsburg, Germany. (jan-f.pietschmann@uni-a.de).}\thanks{Centre for Advanced Analytics and Predictive Sciences (CAAPS), University of Augsburg,
Universit\"atsstr. 12a, 86159 Augsburg, Germany.}\and Markus Schmidtchen\thanks{Institute of Scientific Computing, Technische Universit\"at Dresden, Zellescher Weg 25,
01069 Dresden, Germany. (markus.schmidtchen@tu-dresden.de).}}

\begin{document}
\maketitle

\begin{abstract}
We study the existence of weak solutions for a model of cell invasion into the extracellular matrix (ECM), consisting of a non-linear partial differential equation (PDE) for cell density coupled with an ordinary differential equation (ODE) for ECM density. The model includes cross-species density-dependent diffusion and proliferation terms, capturing the role of the ECM in supporting cells during invasion and preventing growth via volume-filling effects. The occurrence of cross-diffusion terms is a common theme in the system of interacting species with excluded-volume interactions. Additionally, ECM degradation by cells is included. We present an existence result for weak solutions, exploiting the partial gradient flow structure to overcome the non-regularising nature of the ODE. Furthermore, we present simulations that illustrate travelling wave solutions and investigate asymptotic behaviour as the ECM degradation rate tends to infinity.
\end{abstract}

\section{Introduction}
The aim of this work is to analyse a model for cell migration into the extracellular matrix (ECM) that consists of a system including a non-linear, degenerate parabolic, partial differential equation (PDE) for the density of cells, coupled to an ordinary differential equation (ODE) (yet for every point in space) for the ECM.

Mathematical models {of collective cell migration are often implemented to understand how the number of cells within at least one population evolves over} time. 
{However, existing models of this important biological phenomenon become increasingly complex by considering the impacts and influences of various environmental characteristics or additional populations}, such as the availability of nutrients \cite{gatenby1996reaction, mclennan_neural_2015, mclennan_vegf_2015, sherratt_chemotaxis_1994}, surrounding oxygen levels \cite{celora2023spatio}, or physical structures such as the ECM \cite{ crossley2024modeling, dallon_mathematical_1999}. 

The ECM{, in particular,} is an intricate, dynamic network of molecules including elastin, collagen and laminin, {among} others, that varies in structure depending on its location.
The molecules within the ECM are meticulously organised into a complex meshwork that plays an important role in providing structural integrity and repair of tissues, as well as  providing structural and biochemical support to cells during migration \cite{cells13010096}. 
{An} important mechanism of cell-ECM feedback is the degradation of {ECM elements} by surrounding cells through the use of matrix-degrading enzymes, such as membrane-bound and diffusive matrix metalloproteinases \cite{winkler2020concepts}. 
Matrix-degrading enzymes target specific components of the ECM, such as collagen fibers, to degrade them, which, in turn, increases the space available for cells to move into during migration \cite{Brinck2002, Kandhwal2022, Parsons1997}. 

There are a variety of mathematical approaches available for studying the dynamics of cells and the ECM during collective cell migration that are detailed in \cite{crossley2024modeling}. 
However, the specific focus of this work is the following system of differential equations that was derived and non-dimensionalised in \cite{crossley2023travelling} to describe the evolution of a cell density, $u$, and the ECM, $m$, respectively, at a location $x \in \Omega\subset \R^d$, open and bounded, as well as $t\in(0,\infty)$:
\begin{subequations}
\label{eq:main}
\begin{align}
    \frac{\partial u}{\partial t} &=  \nabla \cdot \bigg[ (1- u-m)\nabla u + u \nabla (u+m)\bigg]+ u(1-u-m),\label{eq:main-cells}\\
    \frac{\partial m}{\partial t} &=-\lambda m u.\label{eq:main-ECM}
\end{align}
\end{subequations}
The system \eqref{eq:main} is equipped with initial data assumed to verify
\begin{align*}
    u(x,0) = u_{\mathrm{in}}(x), \qquad \text{ and } \qquad m(x,0) = m_{\mathrm{in}}(x),
\end{align*}
where $u_{\mathrm{in}},\,m_{\mathrm{in}} \in L^2(\Omega)$ satisfy $0 \le u_{\mathrm{in}},m_{\mathrm{in}}$ and $u_{\mathrm{in}} + m_{\mathrm{in}} \le 1$. 
We have chosen to constrain the total mass above by one, in line with the non-dimensional carrying capacity of this model. 
Furthermore, we prescribe no-flux boundary conditions for the cell density, \textit{i.e.},
\begin{align*}
    \bigg[(1- u-m)\nabla u + u\nabla (u+m)\bigg]\cdot {\bf{n}} = 0\qquad \text{ on }\qquad \partial\Omega,
\end{align*}
where ${\bf{n}}$ denotes the outward normal vector on the boundary $\partial \Omega$, as we assume that cells are confined to $\Omega$.
\hfill \vspace{1em}\par
This model follows simplifying assumptions, which are set out below and verified experimentally in \cite{perumpanani1999extracellular}, that state that the timescale over which ECM degradation occurs is far larger than the timescale over which the matrix-degrading enzymes decay, so we can model that cells directly degrade the ECM via their membrane-bound matrix-degrading enzymes only.
As such, Eq.~\eqref{eq:main-ECM} contains only one term that describes the non-dimensionalised degradation rate of ECM by surrounding cells as $\lambda\in\mathbb{R}^{+}$, which largely impacts the speed and structure of travelling wave type solutions observed in this model.

Eq.~\eqref{eq:main-cells} is a reaction-diffusion type PDE that relates closely to the Fisher-Kolmogorov-Petrovsky-Piskunov (Fisher-KPP) equation \cite{fisher_wave_1937, kolmogorov1937study}, which is well-studied in a variety of contexts, including population models in mathematical biology \cite{murray2001mathematical}. 
The first term on the right-hand side of Eq.~\eqref{eq:main-cells} describes the movement of cells down the gradient of the cell density, where movement is prevented by the presence of surrounding cells and ECM. 
The second term models motion of the cells down the gradient of the total density of cells and ECM, ${u}+{m}$, and takes the form of an advective term. 
Diffusion-advection-reaction differential equations with linear diffusion are well-studied in a variety of contexts, see, for example, \cite{perumpanani1995phase, socolofsky2005engineering}.
It is a result of the volume filling assumptions in the underlying individual based model that ensures that the corresponding continuum model includes two flux terms that describe motility{, such as those previously studied in a variety of contexts, which include cross-diffusion \cite{bruna2012diffusion, carrillo2018zoology}, drift \cite{alasio2022existence, alasio2020trend, mason2023macroscopic} and non-local interaction terms \cite{di2018nonlinear}.} 
This is because the volume exclusion processes included prevent cell growth and motility in regions of space with higher density of cells and ECM. 
From a mathematical perspective, we are therefore dealing with a cross-diffusion type of model. 
The final term on the right hand side of Eq.~\eqref{eq:main-cells} captures cell proliferation, which, due to volume-filling effects, is also reduced by the presence of surrounding cells and ECM.

The mathematical analysis of multi-species cross-diffusion models is challenging as, on the one hand, due to the solution-dependent mobilities, such systems are often only degenerate parabolic, as in the current context, which impedes the usual regularising effects. 
On the other hand, the coupling of several equations {often} prevents the use of maximum principles to obtain $L^\infty$-bounds. 
This is particularly important for systems that involve volume exclusion processes, as in this case, since the upper bound on the densities is a central modelling assumption. 
{In the wider context of linear diffusion \cite{kresin2012maximum}, or stationary cross-diffusion \cite{le2023some}, variations of the maximum principle have been employed in order to obtain sufficient regularising bounds \cite{marciniak2010boundedness, tao2014boundedness}. However, they either treat stationary equations or require regularity of solutions which, due to the fact that \eqref{eq:main-cells} is only degenerate parabolic, is not available in our case.}

{However, o}ne, by now established, approach to overcome these difficulties is the so-called boundedness by entropy methods \cite{burger2010nonlinear, JuengelBoundedness2015} that can be applied if the system is (formally) an Otto-Wasserstein gradient flow \cite{JKO1998, Otto1998FirstGradFlow}.
The basic idea is to transform the system into entropy variables, defined as variational derivatives of the entropy functional with respect to the unknown densities. Then, in many practically relevant cases, it turns out that the mapping between these and the original variables automatically enforces the desired bounds. 
This technique has, in different variants, been successfully applied in, for example, \cite{Burger2016, ehrlacher2021existence, zamponi2017analysis}.

Another issue which is specific to the system of equations studied in this work is the fact that Eq.~\eqref{eq:main-ECM}, merely being an ODE, does not contain any diffusive terms and thus, no regularising effect for $m$ can be expected. A similar coupling of PDEs and an ODE was previously analysed in the context of chemotaxis, see, for example, \cite{Corrias2004, MARCINIAK2010, StinnerSurulescuWinkler2013}.
Most importantly in our situation, the coupling also implies that we are not dealing with a formal Otto-Wasserstein gradient flow. A similar situation, yet for a model of tumour growth, was encountered in \cite{BurgerFrielePietschmann2020}.

As a consequence, the main difficulty in studying the aforementioned system stems from the fact that it cannot be cast into {any standard} gradient flow framework. Indeed, we first observe that the form of the flux inside the divergence in Eq.~\eqref{eq:main-cells} is exactly the same as for the volume filling model analysed in \cite{burger2010nonlinear}. 
The entropy functional that the authors used in their work, yet without self-diffusion for $m$, reads as 
\begin{equation}
    \label{eq:entropy-cts}
    \E (u,m) = \int_\Omega u(\log(u)-1) +(1-\rho)(\log(1-\rho) - 1) \dx x,
\end{equation}
with $\rho(x,t) := u(x,t) + m(x,t)$. 
This allows us to rewrite Eq.~\eqref{eq:main-cells} in the form
\begin{equation*}
    \frac{\partial u}{\partial t} = \nabla \cdot \bigg[u(1-\rho) \nabla \frac{\delta \E}{\delta u}\bigg] + u(1-\rho). \label{Eeqn_u}
\end{equation*}
{
The form of the right-hand side of Eq. (1a) dictates an Otto-Wasserstein-type gradient flow, which, in turn, implies the functional form of the free energy with respect to $u$, up to a constant, possibly depending on $m$. However, the reaction term in Eq.~\eqref{eq:main-cells} disrupts this gradient flow structure as the equation is not mass-conserving. 
Coupled to Eq.~\eqref{eq:main-ECM} it is not clear in which sense the whole system could be interpreted as gradient flow. 
Since the system does not fall into a remotely standard gradient flow framework that is often exploited in existence proofs.
} 
In particular, we do not expect the entropy (Eq.~\eqref{eq:entropy-cts}) to act as a Lyapunov functional. 
Nevertheless, we can show that it grows at most linearly in time. 
Then, we are able to extract regularity information from its dissipation which provides adequate compactness to establish the existence of solutions to the system. 
These estimates are made rigorous by means of a time-discretistion and regularisation procedure.

The cornerstone of this analysis is the observation that there is a regularising effect for the ECM equation despite it only being an ODE. 
Albeit not being a gradient flow, from the dissipation we obtain an estimate of the form
$$
    \|\nabla u\|^2_{L^2({t_{n-1},t_n}; L^2(\Omega))} - \|\nabla m\|^2_{L^2({t_{n-1},t_n}; L^2(\Omega))} \leq C,
$$
in conjunction with the conditional regularising effect for the ECM density, \textit{i.e.},{
\begin{align*}
    \norm{\nabla m}_{L^\infty({t_{n-1},t_n};L^2(\Omega))}^2 
    \leq 2^n \norm{\nabla m_0}_{L^2(\Omega)}^2 + 2 \tau \lambda^2 \Delta t \sum_{i=0}^{n-1} 2^{n-i} \norm{\nabla u}^2_{L^2({t_i,t_{i+1}}; L^2(\Omega))},
\end{align*}
}
on any time interval $[t_n, t_{n+1}]$ of length $\Delta t>0${, see Lemma \ref{lem:conditional-grad-m}.} 
It requires a fine study to combine both estimates in a suitable way that allows extension to the whole time interval $[0,T]$. 

\subsection{Statement of the main theorem and structure of the paper}
Before presenting the main result of this work, let us introduce our notion of a weak solution.
\begin{definition}[Weak solution]
	\label{def:weak-soln}
	For given initial data $(u_0,m_0) \in L^2(\Omega) \times H^1(\Omega)$ with $0 \leq m_0, \,u_0$ such that $u_0 + m_0 \leq 1$ a.e. in $\Omega$, we call $(u,m)$ a \emph{weak solution} to Eqs.~\eqref{eq:main}, if $ u,m\in L^2(0,T;H^1(\Omega))$ such that $\partial_t u \in L^2(0,T;(H^1(\Omega))')$ and $\partial_t m \in L^2(0,T;L^2(\Omega))$, and if, for any $\varphi \in H^1(\Omega)$, there holds
	\begin{align*}
		&\int_0^T\int_\Omega \partial_t u \, \varphi \, \dx x \dx t+ \int_0^T \int_\Omega [(1-u-m)\nabla u + u \nabla (u+m)] \cdot \nabla \varphi \, \dx x \dx t  \nonumber \\
  &\qquad\qquad\qquad\qquad\qquad\qquad\qquad= \int_0^T \int_\Omega u (1-u-m)\varphi \, \dx x \dx t,
	\end{align*}
	and $u(\cdot,0)=u_0$ as well as
	\begin{align*}
		\frac{\partial m}{\partial t} = - \lambda m u,
	\end{align*}
	for almost every $x \in \Omega$ and $t\in [0,T]$, as well as $m(\cdot, 0) = m_0$.
\end{definition}

Pursuing the strategy outlined above, we are able to prove the following existence result.
\begin{theorem}[Existence]
\label{thm:main} Given $(u_0,m_0) \in L^2(\Omega) \times H^1(\Omega)$ such that $0 \le m_0, \,u_0$ and $u_0 + m_0 \le 1$ a.e. in $\Omega$, there exists a weak solution $(u,m) \in (L^2(0,T;H^1(\Omega)))^2$ to system~\eqref{eq:main} in the sense of Definition~\ref{def:weak-soln}, which also satisfies the box constraints 
$$
0 \le u,\,m \text{ and } u + m \le 1\text{ a.e. in }\Omega \times (0,T).
$$
In addition, there exist non-negative constants {$C_\mathrm{u}$ and $C_\mathrm{m}$, depending on $\Omega$, $T$ and initial data $(u_0,m_0) \in L^2(\Omega) \times H^1(\Omega)$, only, s.t. 
\begin{align}\label{eq:a_priori}
    \|\nabla u\|_{L^2(0,T;L^2(\Omega))}^2  \leq C_\mathrm{u},\quad \text{  and  }\quad \|\nabla m\|_{L^2(0,T;L^2(\Omega))}^2  \leq C_\mathrm{m}.
\end{align}
}
\end{theorem}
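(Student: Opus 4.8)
The plan is to construct a weak solution by combining a semi-implicit time discretisation with a mild regularisation, to extract \emph{a priori} estimates from the (non-monotone) entropy $\E$ of Eq.~\eqref{eq:entropy-cts} together with a gradient estimate for the ODE, and finally to pass to the limit by compactness. Fix $T>0$ and $N\in\N$, set $\Delta t=T/N$ and $t_n=n\Delta t$. Given $(u^{n-1},m^{n-1})$ with $0\le u^{n-1},m^{n-1}$ and $u^{n-1}+m^{n-1}\le1$, I would determine $m^n$ in closed form from the discretised ODE, e.g.\ $m^n=m^{n-1}/(1+\lambda\Delta t\,u^n)$, which preserves $0\le m^n\le m^{n-1}\le1$, and then solve the quasilinear elliptic problem
\begin{equation*}
  \frac{u^n-u^{n-1}}{\Delta t}=\nabla\cdot\big[(1-m^n)\nabla u^n+u^n\nabla m^n\big]+u^n(1-u^n-m^n)
\end{equation*}
with no-flux boundary conditions. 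To obtain solvability \emph{and} the box constraints simultaneously, I would work in the entropy variable $w=\delta\E/\delta u=\log u-\log(1-u-m)$, for which $u=(1-m)e^{w}/(1+e^{w})\in[0,1-m]$ and $u+m=(e^{w}+m)/(1+e^{w})\le1$ automatically; adding a regularising term $\varepsilon(w-\Delta w)$ restores uniform ellipticity, and a fixed-point argument (Leray--Schauder, or Brouwer on a Galerkin truncation) combined with the energy estimate for the regularised scheme yields a solution $(u^n,m^n)$ satisfying the constraints.

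The heart of the proof is a pair of discrete estimates. Testing the $u$-equation against $w^n$ and using the convexity of $\E$ gives, after summation over $k\le n$, a discrete analogue of
\begin{equation*}
  \E(u^n,m^n)+\sum_{k\le n}\Delta t\int_\Omega u^k(1-\rho^k)\,|\nabla w^k|^2\,\dx x\le\E(u_0,m_0)+C\,t_n,
\end{equation*}
where $\rho^k=u^k+m^k$; here the reaction contribution $\int_\Omega w^k u^k(1-\rho^k)\,\dx x$ and the ODE contribution $\lambda m^k u^k\log(1-\rho^k)\le0$ are controlled using only $0\le u^k,\rho^k\le1$ (so that $x(\log x-1)$ and $(1-x)\log(1-x)$ stay bounded), which yields the at-most-linear growth of the entropy and, since $\E$ is bounded below, a uniform bound on the dissipation. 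Expanding $u^k(1-\rho^k)|\nabla w^k|^2$, dropping its two non-negative pieces and applying Young's inequality to the cross term gives the coercivity estimate $\int_\Omega u^k(1-\rho^k)|\nabla w^k|^2\,\dx x\ge\|\nabla u^k\|_{L^2(\Omega)}^2-\|\nabla m^k\|_{L^2(\Omega)}^2$, hence $\sum_{k\le n}\Delta t\,(\|\nabla u^k\|_{L^2}^2-\|\nabla m^k\|_{L^2}^2)\le C$. Separately, differentiating the discretised ODE and using $0\le m^k\le1$ yields $\|\nabla m^k\|_{L^2}^2\le(1+\lambda\Delta t)\|\nabla m^{k-1}\|_{L^2}^2+C\Delta t\,\|\nabla u^k\|_{L^2}^2$, which iterates to the conditional regularising estimate quoted in the introduction.

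The main obstacle — and the most delicate step — is to reconcile these two facts: the entropy controls only the \emph{difference} $\|\nabla u\|^2-\|\nabla m\|^2$ (no better coercivity is available, precisely because the system is not a gradient flow), while $\|\nabla m\|$ is bounded by $\|\nabla u\|$ only through a Grönwall inequality whose naive constant grows geometrically in the number of steps. I would close the loop by feeding the ODE estimate into the entropy estimate step by step: with $a_k=\Delta t\|\nabla u^k\|_{L^2}^2$, $b_k=\Delta t\|\nabla m^k\|_{L^2}^2$, $g_k=\|\nabla m^k\|_{L^2}^2$ and $\delta_k$ the entropy decrement (so $\sum_k\delta_k\le C$), the relations $a_k\le b_k+\delta_k+C\Delta t$, $b_k\le C\Delta t\,g_{k-1}+C\Delta t\,a_k$ and $g_k\le(1+\lambda\Delta t)g_{k-1}+C\Delta t\,a_k$ combine, for $\Delta t$ small enough, into $g_n\le(1+C\Delta t)g_{n-1}+C\delta_n+C\Delta t$; discrete Grönwall then yields $\sup_n g_n\le C$ and subsequently $\sum_k a_k\le C$ and $\sum_k b_k\le C$, all uniformly in $N$ (and $\varepsilon$). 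This gives $\|\nabla u\|_{L^2(0,T;L^2(\Omega))}^2\le C_u$ and $\|\nabla m\|_{L^2(0,T;L^2(\Omega))}^2\le C_m$ with $C_u,C_m$ depending only on $\Omega$, $T$ (and the fixed data $\lambda$, $m_0$), together with uniform bounds on $\partial_t u$ in $L^2(0,T;(H^1(\Omega))')$ and on $\partial_t m$ in $L^2(0,T;L^2(\Omega))$ (the latter is immediate since $|\partial_t m|=\lambda m u\le\lambda$).

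Finally, I would send $\varepsilon\to0$ and $N\to\infty$. The Aubin--Lions lemma gives strong convergence of the time interpolants of $u$ in $L^2(0,T;L^2(\Omega))$; strong convergence of $m$ follows either in the same way or, more transparently, from the closed-form representation $m(x,t)=m_0(x)\exp\!\big(-\lambda\int_0^t u(x,s)\,\dx s\big)$ and dominated convergence, which at the same time identifies the limiting ODE. Strong-times-weak convergence then passes to the limit in the flux terms $(1-u-m)\nabla u$ and $u\nabla(u+m)$ and in the reaction term $u(1-u-m)$, the $\varepsilon$-terms vanish, and the constraints $0\le u,m$ and $u+m\le1$ are preserved under a.e.\ convergence, so the limit is a weak solution in the sense of Definition~\ref{def:weak-soln} obeying Eq.~\eqref{eq:a_priori}.
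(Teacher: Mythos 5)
Your overall architecture mirrors the paper's exactly: a semi-implicit Euler scheme with an $O(\tau)$-regularisation solved in the entropy variable $w = \log u - \log(1-\rho)$ so that the box constraints come for free, a fixed-point construction of the iterates, a discrete entropy inequality whose dissipation controls only the difference $\|\nabla u\|^2 - \|\nabla m\|^2$, a gradient bound for $m$ extracted directly from the discretised ODE, and finally Aubin--Lions compactness. Where you genuinely diverge is in the loop-closing step between the entropy coercivity and the ODE gradient estimate. The paper aggregates the fine time grid into blocks of length $\Delta t = \ell\tau \approx 1/(2\lambda)$ and runs a Grönwall argument on the block quantities $\phi_n$: the per-block amplification constant is $2$, but the number of blocks $N \approx 2\lambda T$ is \emph{independent of} $\tau$, so the factor $2^N$ stays bounded as $\tau \to 0$. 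You instead stay on the fine grid and massage the squaring step in the ODE estimate so that the per-step amplification is $(1+C\Delta t)$ rather than $2$, giving $(1+C\Delta t)^{T/\Delta t} \to e^{CT}$. Both routes defeat the naive geometric blow-up you correctly identify as the crux; yours is the more elementary and scale-free of the two, and avoids the somewhat delicate bookkeeping with the coarse index $\ell(\tau)$.

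One point in your version that you should spell out more carefully: in the recursion $g_n \leq (1+C\Delta t)g_{n-1} + C\delta_n + C\Delta t$, the entropy increments $\delta_n = \E(u^{n-1},m^{n-1}) - \E(u^n,m^n)$ are \emph{not} sign-definite (the entropy is not a Lyapunov functional here), so discrete Grönwall does not immediately control $\sum_k (1+C\Delta t)^{n-k}\delta_k$ via $\sum_k \delta_k \leq C$. You need an Abel-summation argument: writing $\beta_k = (1+C\Delta t)^{n-k}$ and using that $\E$ is uniformly bounded above and below (since $0 \leq u,m,1-\rho \leq 1$ on a bounded $\Omega$) gives
\begin{align*}
\sum_{k=1}^n \beta_k \delta_k = \beta_1 \E_0 - \E_n + \sum_{k=1}^{n-1}(\beta_{k+1}-\beta_k)\E_k \leq C\,(1+C\Delta t)^{n-1} \leq C e^{CT},
\end{align*}
since $\beta_{k+1}-\beta_k = -C\Delta t\,\beta_{k+1} < 0$ and $-\E_k \leq C$. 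With that filled in, your bound $\sup_n g_n \leq C$ and hence $\sum_k a_k \leq C$ do close correctly. A small internal inconsistency: your stated intermediate $g_k \leq (1+\lambda\Delta t)g_{k-1} + C\Delta t\,a_k$ has an extra factor of $\Delta t$ (with $a_k = \Delta t\|\nabla u^k\|^2$, the ODE estimate gives $g_k \leq (1+\lambda\Delta t)g_{k-1} + C\,a_k$); your final recursion is the one consistent with the correct version, so the conclusion is unaffected, but the annotation should be fixed. Lastly, you introduce a separate small parameter $\varepsilon$ for the elliptic regularisation, which forces a double limit; taking the regularisation strength equal to $\tau$, as the paper does, collapses this to a single limit and slightly shortens the argument.
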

The rest of the paper is organised as follows. 
Section~\ref{sec:proof} is devoted to the proof of Theorem~\ref{thm:main} and subdivided into the construction of the time-discrete, regularised approximation (Section~\ref{sec:time_discrete}), the rigorous derivation of a-priori estimates (Section~\ref{sec:a_prioi}) and finally, the passage to the limit to recover a weak solution (Section~\ref{sec:exist-weak-sol}). 

In Section~\ref{sec:numerics}, we then provide numerical examples of the travelling wave solutions that are exhibited in the system~\eqref{eq:main} and investigate the limiting behaviour as $\lambda \to\infty$.

\section{Proof of the main theorem}
\label{sec:proof}
The first step of our proof consists of constructing a time discrete, regularised (both in the equations and the initial datum for the ECM equation) approximation to the system \eqref{eq:main}, which is, however, still non-linear. Therefore, we have to show the existence of iterates for this scheme by means of a fixed point argument. It turns out that a regularisation of the initial datum $m_{\mathrm{in}}$ results in solutions satisfying the box constraints strictly. This allows us to obtain a time-discrete, yet rigorous, version of the entropy--entropy-dissipation inequality. Thus, using a careful construction to mitigate the lack of regularity in $m$, we eventually obtain bounds that are sufficient to pass to the limit and obtain the desired weak solution. Throughout our analysis we will use the letter $C$ to denote constants that may change from line to line.

\subsection{Time discretisation and regularisation}
\label{sec:time_discrete}
Let $N\in\mathbb{N}$, and discretise time such that $(0,T]$ has sub-intervals of the form 
\begin{equation*}
    (0,T]=\bigcup_{k=1}^N \big((k-1)\tau, k\tau\big],
\end{equation*}
where $\tau=\frac{T}{N}$. 
We write the recursive sequence 
\begin{subequations}
\label{eq:implicit-euler}
\begin{align}
    \frac{u_k-u_{k-1}}{\tau}&=\tau {(\Delta w_{k} - w_{k})}+\nabla \cdot \big(u_{k}(1-\rho_{k})\nabla w_{k}\big)+u_{k}(1-\rho_{k}), \label{eq:discretised_u} \\
    \frac{m_k-m_{k-1}}{\tau} &= - \lambda m_k u_k,\label{eq:discretised_m}
\end{align}
\end{subequations}
where $\rho_k := u_k + m_k$, $w_k = w(u_k, m_k)$ and with regularised initial data 
$$
		m_{0} :=  \max(\tau, \min(m_{\mathrm{in}}, 1 - \tau)),\quad \text{and} \quad u_0 = u_{\mathrm{in}}.
$$
Furthermore, we recall that
\begin{equation}
\begin{aligned}
    \label{eq:defn-entr-var-w}
    w(u,m)&\coloneqq \frac{\delta \E}{\delta u}= \log(u)-\log(1-\rho), 
\end{aligned}
\end{equation}
such that the transformation from $(u,m)$ to the entropy variable $w$ is given by
\begin{equation*}
    u=\frac{(1-m)e^{w}}{1+e^{w}}= \frac{(1-m)}{1+e^{-w}}. \label{eq:transform_w_to_u}
\end{equation*}
We note that Eq.~\eqref{eq:discretised_m} can be solved for $m_k$ when given $m_{k-1}$ and $u_k$ as 
\begin{equation}
 m_k = \frac{m_{k-1}}{1+\lambda \tau u_k}. \label{eq:discretised_m_sol}   
\end{equation}
In order to show the existence of weak solutions, we define the set 
$${
    \A:= \left\{ (u,m) \in (L^\infty(\Omega))^2: \, 0 \le u, \, m \le 1, \, 0 \le \rho = u + m  \le 1 \right\}.}
$$
\begin{lemma}\label{lem:s1}
Given $\tau > 0$ and  $(\tilde u, \tilde m) \in \A$, we set $\tilde\rho = \tilde u + \tilde m$. Then, the linear problem 
\begin{align}
    \label{eq:discrete_linearized}
    \int_\Omega (\tau + \tilde{u}(1-\tilde\rho))\nabla w\cdot \nabla \varphi + \tau w\varphi \; \dx x = \int_\Omega \left[\tilde u(1-\tilde \rho) - \frac{\tilde u - u_{k-1}}{\tau} \right]\varphi\; \dx x,
\end{align}
for all $\varphi \in H^1(\Omega)$, has a unique solution $w \in H^1(\Omega)$ such that
\begin{align}
    \label{eq:discrete_linearized_a_priori}
    \|w\|_{H^1(\Omega)} \le C,
\end{align}
where the constant $C>0$ depends only on $\tau$ and $u_{k-1}$.

In addition, the operator $S_1 : \A \to L^2(\Omega)$, which assigns some given $(\tilde u, \tilde m) \in \A$ to $w$, being the solution to Eq.~\eqref{eq:discrete_linearized} is continuous and compact. 
\end{lemma}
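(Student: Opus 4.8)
The plan is to apply the Lax–Milgram theorem to obtain existence and uniqueness of $w$, then derive the a priori bound by testing with $w$ itself, and finally establish continuity and compactness of $S_1$ via the compact embedding $H^1(\Omega) \hookrightarrow L^2(\Omega)$. First, observe that since $(\tilde u, \tilde m) \in \A$, we have $0 \le \tilde u \le \tilde \rho \le 1$, so the coefficient $a(x) := \tau + \tilde u (1-\tilde\rho)$ satisfies $\tau \le a(x) \le \tau + \tfrac14$ (using $\tilde u(1-\tilde\rho) \le \tilde\rho(1-\tilde\rho) \le \tfrac14$). Hence the bilinear form
\begin{align*}
    B(w,\varphi) := \int_\Omega a(x)\, \nabla w \cdot \nabla \varphi + \tau\, w\, \varphi \; \dx x
\end{align*}
is bounded on $H^1(\Omega) \times H^1(\Omega)$ and coercive, since $B(w,w) \ge \min(\tau, \tau) \|w\|_{H^1(\Omega)}^2 = \tau \|w\|_{H^1(\Omega)}^2$. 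The right-hand side $\varphi \mapsto \int_\Omega [\tilde u(1-\tilde\rho) - \tau^{-1}(\tilde u - u_{k-1})]\varphi \, \dx x$ defines a bounded linear functional on $H^1(\Omega)$: indeed $\tilde u(1-\tilde\rho) \in L^\infty(\Omega)$ with values in $[0,\tfrac14]$, $\tilde u \in L^\infty(\Omega) \subset L^2(\Omega)$, and $u_{k-1} \in L^2(\Omega)$, so the functional has norm bounded by a constant depending only on $\tau$, $|\Omega|$ and $\|u_{k-1}\|_{L^2(\Omega)}$. Lax–Milgram then yields a unique $w \in H^1(\Omega)$.

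For the estimate \eqref{eq:discrete_linearized_a_priori}, take $\varphi = w$ in \eqref{eq:discrete_linearized}; coercivity gives $\tau \|w\|_{H^1(\Omega)}^2 \le B(w,w) = \int_\Omega [\tilde u(1-\tilde\rho) - \tau^{-1}(\tilde u - u_{k-1})] w \, \dx x \le C(\tau, u_{k-1}) \|w\|_{L^2(\Omega)} \le C(\tau, u_{k-1}) \|w\|_{H^1(\Omega)}$, whence $\|w\|_{H^1(\Omega)} \le \tau^{-1} C(\tau, u_{k-1})$, a bound depending only on $\tau$ and $u_{k-1}$ as claimed. (Note the bound is uniform over $(\tilde u,\tilde m) \in \A$, which will matter for compactness.)

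For continuity and compactness of $S_1 : \A \to L^2(\Omega)$: take a sequence $(\tilde u_j, \tilde m_j) \to (\tilde u, \tilde m)$ in $(L^2(\Omega))^2$ with all elements in $\A$, and write $w_j = S_1(\tilde u_j, \tilde m_j)$. By the uniform bound just proved, $(w_j)$ is bounded in $H^1(\Omega)$, so up to a subsequence $w_j \rightharpoonup w^*$ in $H^1(\Omega)$ and $w_j \to w^*$ strongly in $L^2(\Omega)$ by Rellich–Kondrachov. One passes to the limit in the weak formulation \eqref{eq:discrete_linearized}: the coefficient $a_j = \tau + \tilde u_j(1-\tilde u_j - \tilde m_j)$ converges to $\tau + \tilde u(1-\tilde\rho)$ a.e. (along a further subsequence) and is uniformly bounded, so $a_j \nabla \varphi \to a \nabla \varphi$ strongly in $L^2$ while $\nabla w_j \rightharpoonup \nabla w^*$ weakly, allowing passage to the limit in the diffusion term; the remaining terms are straightforward using strong $L^2$ convergence of $w_j$, $\tilde u_j$, $\tilde m_j$. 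Thus $w^* = S_1(\tilde u, \tilde m)$ by uniqueness, and since the limit is independent of the subsequence, the whole sequence converges — giving both continuity and, because the image of any bounded set lands in a bounded subset of $H^1(\Omega)$ which is precompact in $L^2(\Omega)$, compactness of $S_1$.

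The main obstacle is the passage to the limit in the nonlinear coefficient $a_j \nabla w_j$ in the diffusion term: this requires combining the \emph{weak} $H^1$ convergence of $w_j$ with \emph{strong} (a.e.\ and dominated) convergence of the coefficients $a_j$, which is legitimate precisely because the $a_j$ are uniformly bounded in $L^\infty$ thanks to the box constraint defining $\A$; care is also needed to ensure the a priori bound on $w_j$ is genuinely uniform in $j$ (it is, since $C$ depends only on $\tau$ and $u_{k-1}$, not on $(\tilde u_j, \tilde m_j)$).
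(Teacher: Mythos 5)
Your proof is correct and covers all three parts of the lemma, but the continuity argument follows a genuinely different route from the paper's. For existence, uniqueness and the a priori bound your arguments coincide with the paper's (Lax--Milgram plus testing with~$w$). For continuity of~$S_1$, you use a compactness argument: bound $(w_j)$ uniformly in~$H^1$, extract a weakly convergent subsequence, pass to the limit in the weak formulation using a.e.\ convergence of the uniformly bounded coefficients together with weak $L^2$ convergence of $\nabla w_j$, identify the limit via uniqueness, and then invoke the subsequence principle. The paper instead subtracts the two variational equations, tests with $\varphi = w - w_n$, and derives the explicit quantitative estimate
$\tau\,\|w - w_n\|_{H^1(\Omega)}^2 \le C'\,\bigl(\|\tilde u - \tilde u_n\|_{L^\infty(\Omega)} + \|\tilde m - \tilde m_n\|_{L^\infty(\Omega)}\bigr)\,\|w-w_n\|_{H^1(\Omega)}$,
giving strong $H^1$-convergence of $w_n$ under $L^\infty$ convergence of the data. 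The trade-off: your argument assumes only $L^2$ convergence of $(\tilde u_j,\tilde m_j)$ (a weaker topology on the domain, hence a stronger continuity statement in that direction), but it is non-quantitative and delivers only $L^2$ convergence of $w_j$; the paper's argument requires $L^\infty$ convergence of the data but is a clean two-line estimate yielding the stronger $H^1$ convergence of the entropy variable, which it then uses merely to read off $L^2$ continuity and compactness. Both are valid for the subsequent Schauder fixed-point argument.
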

\begin{proof}
As $\tilde u(1-\tilde \rho) \ge 0$ and $\tau > 0$, the existence  of a unique solution $w\in H^1(\Omega)$ is a direct consequence of the Lax-Milgram Lemma, cf. \cite{Brezis2010}. The a priori bound follows by choosing $\varphi = w$ as a test function and applying a weighted Young inequality to the right-hand side of Eq.~\eqref{eq:discrete_linearized}.

For the continuity of $S_1$, consider a sequence $(\tilde u_n, \tilde m_n) \in \A$ such that $(\tilde u_n, \tilde m_m) \to (\tilde u, \tilde m) \in \A$ and denote by $w_n$ and $w$ the respective solutions to Eq.~\eqref{eq:discrete_linearized}. Subtracting the respective equations and choosing $\varphi = (w-w_n)$ yields
\begin{align}
    \label{eq:discretized_continuity}
    &\int_\Omega (\tau + \tilde u(1-\tilde \rho)|\nabla(w-w_n)|^2 + \tau |w-w_n|^2\dx x\nonumber\\
    &\qquad = \int_\Omega [\tilde u (1-\tilde \rho) - \tilde u_n(1-\tilde \rho_n)]\nabla w_n\cdot \nabla (w-w_n) - \frac{\tilde u - \tilde u_n}{\tau}(w-w_n) \dx x.
\end{align}
As both $(\tilde u_n, \tilde m_n) \in \A$ and $(\tilde u, \tilde m) \in \A$, elementary calculations show that the following estimate holds:
$$
|\tilde u(1-\tilde \rho)-\tilde u_n(1-\tilde \rho_n)| \le 2|\tilde u - \tilde u_n| + |\tilde m - \tilde m_n|. 
$$
Additionally, we can use that  $0 \le \tilde u(1-\tilde \rho) \le 1$ and estimate the left-hand side in Eq.~\eqref{eq:discretized_continuity} from below to obtain
\begin{align*}
    &\tau \|w-w_n\|_{H^1(\Omega)}^2 \leq \int_\Omega [\tilde u (1-\tilde \rho) - \tilde u_n(1-\tilde \rho_n)]\nabla w_n\cdot \nabla (w-w_n) - \frac{\tilde u - \tilde u_n}{\tau}(w-w_n) \dx x \\
    &\quad\leq \prt*{2\|\tilde u - \tilde u_n\|_{L^\infty(\Omega)} + \|\tilde m - \tilde m_n\|_{L^\infty(\Omega)}}\|\nabla w_n\|_{L^2(\Omega)}\|\nabla (w-w_n)\|_{L^2(\Omega)} \\
    &\qquad\qquad\qquad\qquad+ \frac{\sqrt{|\Omega|}}{\tau}\|\tilde u - \tilde u_n\|_{L^\infty(\Omega)}\|w-w_n\|_{L^2(\Omega)}\\
    &\quad\le C' \prt*{\|\tilde u - \tilde u_n\|_{L^\infty(\Omega)} + \|\tilde m - \tilde m_n\|_{L^\infty(\Omega)}}\|w-w_n\|_{H^1(\Omega)},
\end{align*}
where $C'>0$ only depends on $\tau$, $\Omega$ and $C>0$ comes from Eq.~\eqref{eq:discrete_linearized_a_priori}. 
Thus, as $(\tilde u_n, \tilde m_n) \to (\tilde u,\tilde m)$ in $L^\infty(\Omega)$, we conclude that $w_n \to w$ in $H^1(\Omega)$.
Finally, the compactness of $S_1$ then follows from the compactness of the embedding $H^1(\Omega) \hookrightarrow L^2(\Omega)$.
\end{proof}

Our next goal is to recover a new pair $(u,m)$ from $w$. To this end, we observe that, by definition of $w$, we have 
\begin{align}
    \label{eq:u_from_m}
    u = (1-m)\frac{e^{w}}{1+e^{w}} =: a_{w}(1-m).
\end{align}
Inserting this relation into Eq.~\eqref{eq:discretised_m} yields 
\begin{align}
    \label{eq:discretized_m_nonlinear}
    \frac{m-m_{k-1}}{\tau} = -\lambda um = -\lambda a_{w}m(1-m)
\end{align}
The next result shows that, indeed, this equation is uniquely solvable.
\begin{lemma}
\label{lem:s2}
Given $m_{k-1}$ that satisfies $0 \le  m_{k-1} \le 1$, and for $\tau$ sufficiently small but independent of $m_{k-1}$, Eq.~\eqref{eq:discretized_m_nonlinear} has a unique solution $m \in [0,1]$.

Additionally, the operator $S_2 : L^2(\Omega) \to \A$, which maps $w$ to $(u,m)$ where $u$ is calculated from $m$ and $w$ via Eq.~\eqref{eq:u_from_m}, is well-defined and continuous.
\end{lemma}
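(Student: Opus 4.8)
\emph{Plan.} Fix $\tau>0$ and $w\in L^2(\Omega)$, and set $a_w(x):=e^{w(x)}/(1+e^{w(x)})\in(0,1)$ for a.e. $x\in\Omega$. Multiplying Eq.~\eqref{eq:discretized_m_nonlinear} by $\tau$ and rearranging, the equation to be solved is the \emph{pointwise} (in $x$) scalar relation
$$
F(m;x):=m\bigl(1+\lambda\tau a_w(x)(1-m)\bigr)=m_{k-1}(x).
$$
I would first analyse, for a.e. fixed $x$, the scalar problem $F(m;x)=p$ with parameters $a:=a_w(x)\in[0,1]$ and $p:=m_{k-1}(x)\in[0,1]$: the map $m\mapsto F(m;x)$ is a quadratic polynomial with $F(0;x)=0$, $F(1;x)=1$ and $\partial_m F(m;x)=1+\lambda\tau a(1-2m)\ge 1-\lambda\tau a\ge 1-\lambda\tau$. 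Hence, choosing $\tau\le\tfrac{1}{2\lambda}$ — a threshold depending on $\lambda$ only, hence independent of $m_{k-1}$ — one has $\partial_m F\ge\tfrac12>0$ on $[0,1]$, so $F(\cdot;x):[0,1]\to[0,1]$ is a strictly increasing bijection and there is a unique $m(x)\in[0,1]$ solving $F(m(x);x)=m_{k-1}(x)$. This establishes the first assertion.

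Next I would verify that $S_2$ is well defined. Since $\partial_m F$ never vanishes, the implicit function theorem (or simply inverting the strictly increasing $F(\cdot;x)$, uniformly continuously in the parameter) shows that the solution of $F(m)=p$ depends continuously — in fact smoothly — on $(a,p)\in[0,1]^2$; composing this continuous map with the measurable map $x\mapsto(a_w(x),m_{k-1}(x))$ yields that $x\mapsto m(x)$ is measurable, and $0\le m\le1$ gives $m\in L^\infty(\Omega)$. Defining $u:=a_w(1-m)$ as in Eq.~\eqref{eq:u_from_m} and using $a_w,\,1-m\in[0,1]$, one obtains $0\le u\le 1-m$, so that $u,m\ge0$ and $\rho=u+m\le1$; hence $(u,m)\in\A$ and $S_2$ is well defined.

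For the continuity of $S_2:L^2(\Omega)\to(L^2(\Omega))^2$, I would take $w_n\to w$ in $L^2(\Omega)$, pass to a subsequence along which $w_n\to w$ a.e., and note that $a_{w_n}\to a_w$ a.e.; by the continuous dependence on parameters established above, $m_n\to m$ and $u_n=a_{w_n}(1-m_n)\to a_w(1-m)=u$ a.e. Since $0\le u_n,m_n\le1$, dominated convergence gives $u_n\to u$ and $m_n\to m$ in $L^q(\Omega)$ for every $q\in[1,\infty)$, and the usual subsequence argument (every subsequence has a further subsequence converging to the same limit) upgrades this to convergence of the full sequence. A direct mean-value estimate using $|a_{w_n}-a_w|\le\tfrac14|w_n-w|$ together with $\partial_m F\ge\tfrac12$ even shows that $S_2$ is Lipschitz as a map $L^\infty(\Omega)\to(L^\infty(\Omega))^2$.

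The lemma is essentially elementary once the problem is localised in $x$; the only point requiring care is this localisation itself — turning the scalar analysis into statements about the measurable functions $u,m$ on $\Omega$ and controlling the passage to the limit — which is handled by the implicit function theorem together with dominated convergence. The one genuinely quantitative input, that $\tau$ may be fixed independently of $m_{k-1}$, is immediate since the smallness threshold $\tau\le\tfrac{1}{2\lambda}$ involves only $\lambda$; accordingly I do not expect any real obstacle here.
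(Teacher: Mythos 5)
Your argument is correct; it just reaches the conclusion by a different elementary route than the paper. Where the paper sets up the fixed-point map $K(z)=m_{k-1}/\bigl(1+\tau\lambda a_w(1-z)\bigr)$ on $\B=\{0\le z\le 1\}$ and invokes Banach's contraction principle (the Lipschitz constant being $\tau\lambda a_w\|m_{k-1}\|_{L^\infty}\le\tau\lambda$, so any $\tau<1/\lambda$ works), you instead observe that the scalar polynomial $F(m)=m\bigl(1+\lambda\tau a_w(1-m)\bigr)$ satisfies $F(0)=0$, $F(1)=1$ and $F'\ge 1-\lambda\tau>0$ on $[0,1]$, hence is a strictly increasing bijection of $[0,1]$ and can be inverted directly. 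The two arguments are equivalent in content and strength — both fix $\tau$ by a threshold depending only on $\lambda$, not on $m_{k-1}$ — and yours is arguably the more transparent, making the uniqueness evident from monotonicity without appealing to a fixed-point theorem. On the $S_2$ continuity the paper is terse ("follows from standard computations"); your fleshed-out version via a.e. subsequences, uniform boundedness and dominated convergence (upgraded to the full sequence by the standard subsequence principle) is a perfectly good way to supply the missing detail, and the $L^\infty$ Lipschitz remark is a genuine bonus beyond what is strictly needed for Schauder's theorem. No gaps.
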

\begin{proof}
To establish the existence of a unique solution $m\in [0,1]$ to Eq.~\eqref{eq:discretized_m_nonlinear}, we define the set 
$$
\B = \{ z \in L^\infty(\Omega) \; : \; 0 \le z \le 1\},
$$
and the mapping 
$$
    K:\B \to \B, \quad K(z) = \frac{m_{k-1}}{1+\tau\lambda a_w (1-z)}.
$$
As $0 \le  m_{k-1} \le 1$ and $(1-z) \ge 0$ almost everywhere, $K$ is well-defined,  continuous and every fixed point of $K$ is a solution to Eq.~\eqref{eq:discretized_m_nonlinear}.
Furthermore, we can estimate
\begin{align*}
    \|K(z_1) - K(z_2)\|_{L^\infty(\Omega)} \leq  \tau \lambda a_w \|m_{k-1}\|_{L^\infty(\Omega)}\|z_1-z_2\|_{L^\infty(\Omega)},
\end{align*}
thus, as $a_w \le 1$, for $\tau$ sufficiently small, yet independent of $m_{k-1}$, $K$ is a contraction. Applying Banach's fixed-point theorem we conclude the existence of a unique fixed point $m \in [0,1]$ which is then a solution to Eq.~\eqref{eq:discretized_m_nonlinear}. Finally, the continuity of $S_2$ follows from standard computations.
\end{proof}
We are now in a position to prove the existence of iterates for Eqs.~\eqref{eq:discretised_u} and \eqref{eq:discretised_m}.
\begin{theorem}
	\label{thm:fixed-point}
For given $(u_{k-1},m_{k-1}) \in \A$ with $m_{k-1} \in H^1(\Omega)$ and $\tau$ sufficiently small, there exists {a fixed point $(u_k,m_k) \in \A $. Moreover, $(u_k,m_k) \in (H^1(\Omega))^2$}.
\end{theorem}
\begin{proof}
    Due to the results of Lemma~\ref{lem:s1} and Lemma~\ref{lem:s2}, the operator 
    $$
    S = S_2 \circ S_1 : \A \to \A
    $$
    is well-defined, continuous and compact. 
    Furthermore, it is readily observed that $\A$ is a convex subset of $(L^\infty(\Omega))^2$. 
    Thus, an application of Schauder's fixed point theorem yields the existence of a fixed point $(u_k,m_k) \in \A$ associated to $w_k = S_1(u_k, m_k)$. 
    
    {To show that the fixed-point has $H^1$-regularity, we obtain from Eq.~\eqref{eq:discretized_m_nonlinear} that}
    $$
        \nabla m_k \prt*{\frac1\tau + \alpha_{w_k} \lambda (1-2m_k)} = \frac{\lambda m_k (1-m_k)}{(1+e^{w_k})^2} \nabla w_k + \frac1\tau \nabla m_{k-1}.
    $$
    For sufficiently small $\tau > 0$, the parenthesis on the left-hand side is positive, and we find
    \begin{align}
	    \label{eq:regularity-m}
        \norm{\nabla m_k}_{L^2(\Omega)} 
        \leq C (\norm{\nabla w_k}_{L^2(\Omega)} + \norm{\nabla m_{k-1}}_{L^2(\Omega)}),
    \end{align}
    where $C>0$ only depends on $\lambda, \tau>0$ and therefore $m_k \in H^1(\Omega)$. 
    Furthermore, passing to the gradient in Eq.~\eqref{eq:u_from_m}, we also obtain the desired $H^1$-bound for $u_k${, which concludes the proof.}
\end{proof}

\begin{theorem}[Uniform upper and lower bounds on $u_k,\,m_k$]\label{thm:strict_bounds}
	\label{thm:stability}
	Let $(u_k, m_k)$ be solutions provided by Theorem \ref{thm:fixed-point} and $\tau >0$. Then there holds
 $$
    0 < u_k, \, m_k < 1, \quad \text{ as well as }\quad  0 < \rho_k < 1,
 $$
 for all {$k \in \N^{>0}$}.

\end{theorem}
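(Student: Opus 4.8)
The plan is to run an induction on the time step $k$, upgrading the non-strict box constraints $(u_k,m_k)\in\A$ with $0\le m_k\le1$ --- which are already part of the conclusion of Theorem~\ref{thm:fixed-point} --- to the strict ones. It is worth stressing \emph{why} an induction is needed and where strictness comes from: the cross-diffusion structure precludes a maximum principle, and even a formal one would only return non-strict inequalities. Strictness is instead a consequence of two structural features of the scheme, namely the regularisation $m_0=\max(\tau,\min(m_{\mathrm{in}},1-\tau))$, which bounds the initial ECM datum away from $0$ and $1$, and the fact that the entropy variable $w_k\in H^1(\Omega)\subset L^2(\Omega)$ is finite almost everywhere, so that $a_{w_k}=e^{w_k}/(1+e^{w_k})\in(0,1)$ a.e.

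\textbf{Step 1: strict bounds for $m_k$.} Since $m_{\mathrm{in}}\ge0$ and, for the (small) values of $\tau$ for which the scheme is constructed, $0<\tau<1$, the regularised datum satisfies $0<\tau\le m_0\le\max(\tau,1-\tau)<1$ a.e. For the induction step I would use the explicit representation in Eq.~\eqref{eq:discretised_m_sol}, that is $m_k=m_{k-1}/(1+\lambda\tau u_k)$. By Eq.~\eqref{eq:u_from_m}, $u_k=a_{w_k}(1-m_k)$ with $0\le a_{w_k}\le1$ and $0\le m_k\le1$, hence $0\le u_k\le1$ and the denominator $1+\lambda\tau u_k$ lies in $[1,1+\lambda\tau]$. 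Consequently $0<m_k\le m_{k-1}$ a.e. as soon as $m_{k-1}>0$ a.e., and iterating, $0<m_k\le m_0<1$ a.e. for every $k$. (In particular one even gets the uniform-in-$k$ bound $m_k\le 1-\tau$ when $\tau\le1/2$, which the bare constraint $m_k\le1$ does not give and which is useful later.)

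\textbf{Step 2: strict bounds for $u_k$ and $\rho_k$.} Now that $0<m_k<1$ a.e., I would feed this back into Eq.~\eqref{eq:u_from_m}. Since $w_k$ is finite a.e. we have $a_{w_k}\in(0,1)$ a.e., so $0<u_k=a_{w_k}(1-m_k)\le a_{w_k}<1$ a.e.; here the lower bound uses positivity of both factors and the upper bound uses $a_{w_k}<1$ together with $1-m_k\le1$. For $\rho_k=u_k+m_k$, positivity is immediate as a sum of two a.e.-positive functions, and for the upper bound I would rewrite $\rho_k=a_{w_k}(1-m_k)+m_k=a_{w_k}+m_k(1-a_{w_k})$ and use $1-a_{w_k}>0$ with $m_k<1$ to conclude $\rho_k<a_{w_k}+(1-a_{w_k})=1$ a.e. This completes the induction.

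The argument involves no genuine estimate --- it is essentially bookkeeping --- so the only ``obstacle'' is a matter of care rather than difficulty: one must make sure that the explicit solution formula Eq.~\eqref{eq:discretised_m_sol} and the algebraic relation $u_k=a_{w_k}(1-m_k)$ are indeed available for the Schauder fixed point of Theorem~\ref{thm:fixed-point} (they are, by the construction of $S_1$ and $S_2$ in Lemmas~\ref{lem:s1} and~\ref{lem:s2}), and that the smallness imposed on $\tau$ there is compatible with $0<\tau<1$, which it trivially is. Everything then reduces to the two elementary facts $a_{w_k}\in(0,1)$ a.e. and $m_k\le m_{k-1}\le\cdots\le m_0<1$.
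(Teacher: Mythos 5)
Your Step~1 for $m_k$ is essentially the paper's argument (use the regularisation $m_0 = \max(\tau, \min(m_{\mathrm{in}},1-\tau))$ together with the explicit recursion $m_k = m_{k-1}/(1+\lambda\tau u_k)$ to get $m_0/(1+\lambda\tau)^k \le m_k \le m_0 \le 1-\tau$, which is a genuine $L^\infty$-bound away from $0$ and $1$). The problem lies in Step~2: the theorem is titled ``Uniform upper and lower bounds,'' and the paper actually proves $L^\infty$-bounds \emph{uniform over $\Omega$} of the form $e^{-C_\tau} \le u_k$ and $\rho_k \le 1 - \tau\bigl(1 - 1/(1+e^{-C_\tau})\bigr)$. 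Your argument only gives the \emph{pointwise a.e.} statement $a_{w_k}=e^{w_k}/(1+e^{w_k})\in(0,1)$ a.e., because you appeal solely to the finiteness a.e. of $w_k\in H^1(\Omega)$. But $H^1(\Omega)\not\hookrightarrow L^\infty(\Omega)$ for $d\ge 2$, so finiteness a.e. does not yield any lower bound on $\operatorname{ess\,inf} a_{w_k}$ nor any upper bound on $\operatorname{ess\,sup} a_{w_k}$. Consequently you obtain no uniform lower bound on $u_k$ and no uniform upper bound on $\rho_k$ bounded away from $1$.

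The missing ingredient is precisely what the paper's proof begins with: an $L^\infty$-estimate $\norm{w_k}_{L^\infty(\Omega)}\le C_\tau$, obtained by a Stampacchia-type truncation argument --- using $(w_k - C_\tau)_+$ and $(w_k + C_\tau)_-$ as test functions in the linearised elliptic problem (Eq.~\eqref{eq:w-k-ell1}) and exploiting that the right-hand side there is bounded in $L^\infty(\Omega)$ because $(u_k,m_k)\in\A$. This gives $a_{w_k}\in\bigl[e^{-C_\tau},\,1/(1+e^{-C_\tau})\bigr]$ uniformly, from which the uniform bounds on $u_k$ and $\rho_k$ follow exactly as in your algebra. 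Without the $L^\infty$ control on $w_k$, the estimates you state for $u_k$ and $\rho_k$ are weaker than what the theorem claims, and in particular do not supply the quantitative gap from $0$ and $1$ that makes the logarithms in the entropy dissipation argument (Proposition~\ref{prop:discrete-entropy}) manifestly bounded.
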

The strict bounds {in Theorem~\ref{thm:strict_bounds}} are necessary in order to rigorously derive the entropy-dissipation inequality later on.
\begin{proof}
Starting from $(u_{0}, m_{0})$, we construct $w_1$ which solves
	\begin{align}
		\label{eq:w-k-ell1}
	    \int_\Omega (\tau + u_1(1-\rho_1))\nabla w_1 \cdot \nabla \varphi + \tau w_1 \varphi \;\dx x = \int_\Omega \left[ u_1 (1 - \rho_1) - \frac{ u_1 - u_{0}}{\tau} \right]\varphi\;\dx x.
	\end{align}
	{As the right-hand side is bounded in $L^\infty(\Omega)$, say, by the constant $\tilde C_\tau>0$, depending on $\tau$, we have that
	\begin{align*}
		\norm{w_1}_{L^\infty(\Omega)} \leq C_\tau := \frac{\tilde C_\tau}{\tau}
	\end{align*}
	Thes upper bound  bounds follow by using $(w_1 - C_\tau)_+$ as test functions in Eq.~\eqref{eq:w-k-ell1}.  Indeed, in the first case we end up with 
    \begin{align*}
        \tau \int_\Omega (w_1 - C_\tau)_+^2\dx x &= -\int_\Omega (\tau + u_1(1-\rho_1))\left|\nabla (w_1 - C_\tau)_+\right|^2\;\dx x\\
        &\quad + \int_\Omega \left[ u_1 (1 - \rho_1) - \frac{ u_1 - u_{0}}{\tau} \right](w_1 - C_\tau)_+\;\dx x - \tau\int_\Omega C_\tau(w_1 - C_\tau )_+\dx x\\
        & \le \int_\Omega (\tilde C_\tau - \tau C_\tau)(w_1 - C_\tau)_+\dx x= 0,
    \end{align*}
    as $\tilde C_\tau = \tau C_\tau$. The lower bound follows analogously.
    }
    Note that then
	\begin{align}\label{eq:bounds_on_a_w}
		a_{w_1} = \frac{e^{w_1}}{1 + e^{w_1}} \in \brk*{e^{-C_\tau}, \frac{1}{1+e^{-C_\tau}}}.
	\end{align}
	In addition, note that $m_1$ is related to $w_1$ via Eq.~\eqref{eq:discretized_m_nonlinear}
	\begin{align*}
    	\frac{m_1-m_0}{\tau} = -\lambda a_{w_1}m_1 (1 - m_1).
	\end{align*}	
	Thus, on the one hand, we have
	$$
		\frac{m_1-m_0}{\tau} \leq 0,
	$$
	i.e. $m_1 \leq m_{0} \leq 1 - \tau$. Conversely, we have, as $a_w \le 1$, that
	\begin{align*}
    	\frac{m_1-m_0}{\tau} 
    	&= -\lambda a_{w_1} m_1 (1 - m_1)\geq  -\lambda m_1,
	\end{align*}
	which implies
	$$
		m_1 \geq \frac{\tau}{(1+\tau \lambda)} >0.
	$$
	Recalling Eq.~\eqref{eq:u_from_m} then yields
	\begin{align*}
		u_1 = a_{w_1}(1-m_1)\in (0,1),
	\end{align*}
	owing to the fact that $m_1 \in (0,1)$ and $a_{w_1}\in(0,1)$. For 
$$
    \rho_1 = u_1 + m_1 = a_{w_1} + m_1(1-a_{w_1})
$$
this implies the bounds
\begin{align*}
    0 < e^{-C_\tau} + \frac{\tau}{(1+\tau \lambda)} \le  a_{w_1} + m_1(1 - a_{w_1}) = \rho_1
\end{align*}
as well as 
\begin{align*}
    \rho_1 &= a_{w_1} + m_1 (1 - a_{w_1}) \leq a_{w_1} + (1 - \tau) (1 - a_{w_1}) \\
    &= 1 - \tau (1 - a_{w_1}) \leq 1 - \tau\left(1- \frac{1}{1 + e^{-C_\tau}}\right) < 1.
\end{align*}
Iterating this procedure, we obtain 
\begin{align*}
    \frac{\tau^k}{(1+\tau \lambda)^k} \le m_k \le 1-\tau, 
\end{align*}
as well as
\begin{align*}
    e^{-C_\tau} \le u_k \le 1 -\frac{\tau^k}{(1+\tau \lambda)^k},
\end{align*}
and
\begin{align*}
    e^{-C_\tau} + \frac{\tau^k}{(1+\tau \lambda)^k} \le  \rho_k \le 1 - \tau\left(1-\frac{1}{1+e^{-C_\tau}}\right),
\end{align*}
where  $C_\tau \to \infty$ as $\tau \to 0$.
\end{proof}

{
\begin{corollary}
    The fixed point $(u_k,m_k) \in \A \cap (H^1(\Omega))^2$ constructed above is a weak solution to Eqs.~\eqref{eq:discretised_u}~and~\eqref{eq:discretised_m}.
\end{corollary}
\begin{proof}
    In view of the bounds provided by Theorem \ref{thm:strict_bounds}, we can invert Eq.~\eqref{eq:u_from_m} and obtain $w_k = \log(u_k) - \log(1- \rho_k)$, which allows the identification of $(u_k, m_k)$ as a weak solution of Eqs.~\eqref{eq:discretised_u}~and~\eqref{eq:discretised_m}.
\end{proof}
}
\subsection{A priori estimates}
\label{sec:a_prioi}
Let $(u_k, m_k)_{k=0}^\infty \subset \A \, \cap \, (H^1(\Omega))^2$ be the sequence obtained from the implicit Euler approximation, cf. Eq.~\eqref{eq:implicit-euler}. 
This section is dedicated to deriving estimates independent of the time step size, $\tau>0$, that ultimately provide sufficient compactness to obtain weak solutions to the continuous system, Eq.~\eqref{eq:main}. 

\begin{proposition}
    \label{prop:discrete-entropy}
    Let $(u_k, m_k)_{k=0}^\infty \subset \A \cap (H^1(\Omega))^2$ as defined in Theorem \ref{thm:stability}.  Then, for any $k \in \mathbb N$, the following discrete entropy estimate holds
    \begin{align}
        \label{eq:entropy-dissipation-discrete}
        &\frac1\tau\prt*{\mathcal E(u_k, m_k) - \mathcal E(u_{k-1}, m_{k-1}) }
        + \tau \int_\Omega |\nabla w_k|^2 + |w_k|^2 \, \dx x  \nonumber \\ & \qquad\qquad\qquad\qquad\leq  C - \int_\Omega u_{k}(1-u_k -m_k) |\nabla w_k|^2 \, \dx x,
    \end{align} 
    where $C>0$ only depends on $\Omega$. Moreover,
    \begin{align}
        \label{eq:grad-u-by-grad-m}
        \|\nabla u_k\|_{L^2(\Omega)}^2 \leq C +   \frac1\tau \prt*{\mathcal E(u_{k-1}, m_{k-1}) - \mathcal E(u_k, m_k)} + \|\nabla m_k\|_{L^2(\Omega)}^2.
    \end{align}
    with $C=C(\tau)>0$ independent of $k\in \N$.
\end{proposition}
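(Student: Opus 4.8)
The plan is to read \eqref{eq:entropy-dissipation-discrete} as a \emph{perturbed} entropy--entropy-dissipation inequality: test the discrete equation \eqref{eq:discretised_u} with the entropy variable $w_k$ and exploit the (joint) convexity of $\E$, while the two terms that break the gradient-flow structure — the proliferation term in \eqref{eq:discretised_u} and the pointwise ODE \eqref{eq:discretised_m} for $m$ — are handled, respectively, by a uniform bound and by a sign argument. First I would record that the integrand $f(u,m) = u(\log u - 1) + (1-u-m)(\log(1-u-m) - 1)$ of $\E$ is convex on the set $\{0 < u,\ u + m < 1\}$ (its Hessian has non-negative diagonal and determinant $1/(u(1-\rho)) > 0$), that $\partial_u f = w = \log u - \log(1-\rho)$ and $\partial_m f =: v = -\log(1-\rho) \ge 0$, and that by Theorem~\ref{thm:strict_bounds} the pair $(u_k,m_k)$ lies \emph{strictly} inside this set, so that $w_k \in H^1(\Omega)$ and $v_k \in L^\infty(\Omega)$ are well defined and the convexity inequality
\begin{align*}
    \E(u_k,m_k) - \E(u_{k-1},m_{k-1}) \le \int_\Omega w_k (u_k - u_{k-1}) + v_k (m_k - m_{k-1}) \, \dx x
\end{align*}
holds rigorously, all integrands being at least $L^1(\Omega)$.

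Next I would insert the two scheme equations. Taking $\varphi = w_k$ in the weak formulation of \eqref{eq:discretised_u} (no boundary terms arise) and integrating by parts gives
\begin{align*}
    \int_\Omega w_k (u_k - u_{k-1}) \, \dx x = \tau \int_\Omega u_k (1-\rho_k) w_k \, \dx x - \tau^2 \int_\Omega |\nabla w_k|^2 + |w_k|^2 \, \dx x - \tau \int_\Omega u_k(1-\rho_k)|\nabla w_k|^2 \, \dx x ,
\end{align*}
while \eqref{eq:discretised_m} yields $m_k - m_{k-1} = - \lambda \tau u_k m_k \le 0$ and hence $\int_\Omega v_k (m_k - m_{k-1}) \, \dx x \le 0$ because $v_k \ge 0$; this sign is precisely what substitutes for the missing gradient-flow structure of the ODE. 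Dividing by $\tau$ and rearranging yields \eqref{eq:entropy-dissipation-discrete}, \emph{provided} the leftover proliferation contribution $\int_\Omega w_k u_k(1-\rho_k)\,\dx x$ can be bounded by a constant depending on $\Omega$ only. This is the one genuinely delicate point, since $w_k$ is not bounded uniformly in $\tau$; however, it appears multiplied by the degenerate mobility $u_k(1-\rho_k)$, and writing
\begin{align*}
    w_k u_k (1-\rho_k) = (1-\rho_k)\, u_k \log u_k - u_k\, (1-\rho_k)\log(1-\rho_k),
\end{align*}
the elementary bound $|s \log s| \le e^{-1}$ for $s \in [0,1]$ together with $0 \le u_k, 1 - \rho_k \le 1$ gives $\int_\Omega w_k u_k(1-\rho_k)\,\dx x \le 2 e^{-1} |\Omega| =: C$.

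For the second assertion \eqref{eq:grad-u-by-grad-m}, I would differentiate the representation $u_k = a_{w_k}(1-m_k)$ from \eqref{eq:u_from_m}, using $\nabla a_{w_k} = a_{w_k}(1-a_{w_k}) \nabla w_k$, to obtain
\begin{align*}
    \nabla u_k = a_{w_k}(1-a_{w_k})(1-m_k)\, \nabla w_k - a_{w_k}\, \nabla m_k .
\end{align*}
A weighted Young inequality, combined with $a_{w_k} \le 1$ and the pointwise identity $[a_{w_k}(1-a_{w_k})(1-m_k)]^2 = a_{w_k}(1-a_{w_k})\, u_k(1-\rho_k) \le \tfrac14 u_k(1-\rho_k)$, bounds $\|\nabla u_k\|_{L^2(\Omega)}^2$ by a multiple of the dissipation $\int_\Omega u_k(1-\rho_k)|\nabla w_k|^2\,\dx x$ plus $\|\nabla m_k\|_{L^2(\Omega)}^2$. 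Finally, discarding the non-negative terms $\tau \int_\Omega |\nabla w_k|^2 + |w_k|^2\,\dx x$ in \eqref{eq:entropy-dissipation-discrete} estimates that dissipation by $C + \tfrac1\tau(\E(u_{k-1},m_{k-1}) - \E(u_k,m_k))$, which gives \eqref{eq:grad-u-by-grad-m} with a $\tau$-dependent additive constant.

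I expect the crux to be the treatment of the proliferation term $\int_\Omega w_k u_k(1-\rho_k)\,\dx x$: it is exactly the obstruction to $\E$ being a Lyapunov functional, and the key observation is that the degenerate mobility $u_k(1-\rho_k)$ controls the (uniformly) unbounded entropy variable $w_k$, converting the loss of monotonicity into an at most linear growth of the entropy. The remaining ingredients — the convexity inequality, the integration by parts, and the Young-type bookkeeping in the second part — are routine once the strict positivity and upper bounds of Theorem~\ref{thm:strict_bounds} are available.
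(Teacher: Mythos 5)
Your proof is correct and follows essentially the same route as the paper: the convexity inequality for $\E$, testing~\eqref{eq:discretised_u} with $w_k$, discarding the $\partial_m\E$-contribution via the sign of $m_k - m_{k-1}$, and bounding the reaction term $\int_\Omega w_k u_k(1-\rho_k)\,\dx x$ by a constant depending only on $\Omega$. Your explicit $|s\log s|\le e^{-1}$ bookkeeping for that last term is a helpful expansion of what the paper leaves implicit. The only cosmetic difference is in the second assertion: the paper expands $u_k(1-\rho_k)|\nabla w_k|^2$ directly and bounds it below by $|\nabla u_k|^2 - |\nabla m_k|^2$, whereas you differentiate the inverse transformation $u_k = a_{w_k}(1-m_k)$ and apply Young, which produces~\eqref{eq:grad-u-by-grad-m} with slightly different (but equally serviceable) constants in front of the entropy increment and $\|\nabla m_k\|^2$.
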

\begin{proof}
Owing to the strict upper and lower bounds provided by Theorem~\ref{thm:strict_bounds}, the logarithmic terms appearing in the derivative of the entropy are well defined. Thus, we can use the joint convexity of the energy to obtain
    \begin{align*}
        \mathcal E(u_k, m_k) - \mathcal E(u_{k-1}, m_{k-1}) &\leq \int_\Omega \brk*{\log u_k - \log(1 - u_k - m_k)}(u_{k} - u_{k-1}) \dx x \\
        &\quad - \int_\Omega \log(1 - u_k - m_k) (m_{k} - m_{k-1}) \dx x\\
        &\leq \int_\Omega \brk*{\log u_k - \log(1 - u_k - m_k)}(u_{k} - u_{k-1}) \dx x,
    \end{align*}
    where the last term was discarded due to its sign. Using the definition of the entropy variable, Eq.~\eqref{eq:defn-entr-var-w}, this results in
    \begin{align*}
        \mathcal E(u_k, m_k) - \mathcal E(u_{k-1}, m_{k-1}) \leq \int_\Omega w_k (u_k - u_{k-1} )  \dx x.
    \end{align*}
	Due to the $H^1(\Omega)$-regularity of $w_k$, we may use it as a test functions in Eq.~\eqref{eq:implicit-euler} to get 
    \begin{align*}
        \frac1\tau\prt*{\mathcal E(u_k, m_k) - \mathcal E(u_{k-1}, m_{k-1}) }
        &\leq  - \tau \int_\Omega \prt*{|\nabla w_k|^2 + |w_k|^2} \dx x\\
        &\quad - \int_\Omega \nabla w_k \cdot (u_{k}(1-u_k -m_k) \nabla w_k) \dx x \\
        &\quad + \int_\Omega w_k u_k (1 - u_k - m_k) \dx x.
    \end{align*}
    Integrating by parts, and using that $|\Omega|<\infty$ as well as $0\leq u_k,m_k \leq 1$ and the definition of $w_k$, the inequality further simplifies to 
    \begin{align*}
        &\frac1\tau\prt*{\mathcal E(u_k, m_k) - \mathcal E(u_{k-1}, m_{k-1}) } \nonumber \\
        &\qquad\leq  C - \tau \int_\Omega |\nabla w_k|^2 + |w_k|^2 \dx x - \int_\Omega u_{k}(1-u_k -m_k) |\nabla w_k|^2 \dx x,
    \end{align*}
    where $C>0$ is independent of $k, \ell, \tau$.
    Using the definition of $w_k$, cf. Eq \eqref{eq:defn-entr-var-w}, we further estimate 
    \begin{align*}
        &\int_\Omega u_{k}(1 - u_k - m_k) |\nabla w_k|^2 \dx \nonumber \\  &\qquad\qquad= \int_\Omega (1 - \rho_k)\frac{|\nabla u_k|^2}{u_k} + 2\nabla u_k\cdot \nabla( u_k + m_k) + u_k\frac{|\nabla(1 - \rho_k)|^2}{1 - \rho_k}\dx x\\
        &\qquad\qquad\ge \int_\Omega 2|\nabla u_k|^2 - 2|\nabla u_k| |\nabla m_k|\;\dx x \ge \int_\Omega |\nabla u_k|^2 - |\nabla m_k|^2 \dx x.
    \end{align*}
    Inserting this estimate into the entropy inequality above yields 
    \begin{align*}
		\|\nabla u_k\|_{L^2(\Omega)}^2 \leq C +   \frac1\tau \prt*{\mathcal E(u_{k-1}, m_{k-1}) - \mathcal E(u_k, m_k)} + \|\nabla m_k\|_{L^2(\Omega)}^2.
    \end{align*}
\end{proof}

Before proceeding, let us introduce the piecewise constant interpolations associated to the iterates of the implicit Euler method. 
For $v \in \{u, m, w\}$, we define
\begin{align*}
    v_\tau(x,t) := v_k(x),
\end{align*}
whenever $t\in(t_{k-1}, t_{k}]$, for all $k \geq 1$ and $x \in \Omega$.

\begin{corollary}[$H^1$-estimate for $w_\tau$]
	\label{cor:L2H1-estimate-w}
    Let $(u_k, m_k)_{k=0}^\infty \subset \A \cap (H^1(\Omega))^2$ and $(w_k)_{k=0}^\infty$ be the associated entropy variables defined in Eq.~\eqref{eq:defn-entr-var-w}. Then
    \begin{align*}
        \tau^{1/2} \|w_\tau\|_{L^2(0,T; H^1(\Omega))} \leq C,
    \end{align*}
    where $w_\tau$ denotes the corresponding piecewise constant interpolation and the constant $C>0$ only depends on $\Omega$ and $T>0$.
\end{corollary}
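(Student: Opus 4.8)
The plan is to sum the discrete entropy--dissipation inequality~\eqref{eq:entropy-dissipation-discrete} over the time steps so that the entropy differences telescope. First I would multiply~\eqref{eq:entropy-dissipation-discrete} by $\tau$ and add the resulting inequalities over $k = 1,\dots,N$. The term $-\int_\Omega u_k(1-\rho_k)|\nabla w_k|^2\,\dx x$ appearing on the right-hand side is non-positive on $\A$ (since $0 \le u_k \le \rho_k \le 1$), so it can simply be discarded, and using $N\tau = T$ one is left with
\[
\mathcal{E}(u_N, m_N) - \mathcal{E}(u_0, m_0) + \tau^2 \sum_{k=1}^N \int_\Omega |\nabla w_k|^2 + |w_k|^2 \, \dx x \le T C,
\]
where $C$ is the ($\Omega$-dependent) constant from Proposition~\ref{prop:discrete-entropy}.

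The only genuine ingredient is then a bound on the entropy that is uniform in $\tau$ and in the iterate index. Here I would use that the scalar map $s \mapsto s(\log s - 1)$ takes values in $[-1,0]$ on $[0,1]$; since every iterate $(u_k,m_k)$ satisfies the box constraints $0 \le u_k,\, m_k$, $u_k+m_k\le 1$ (the initial one by construction of the regularised datum, the remaining ones by Theorem~\ref{thm:stability}), both arguments $u_k$ and $1-\rho_k$ of $\mathcal{E}$ lie in $[0,1]$, whence $-2|\Omega| \le \mathcal{E}(u_k,m_k) \le 0$ for all $k$. In particular $\mathcal{E}(u_0,m_0) - \mathcal{E}(u_N,m_N) \le 2|\Omega|$, so that
\[
\tau^2 \sum_{k=1}^N \int_\Omega |\nabla w_k|^2 + |w_k|^2 \, \dx x \le T C + 2|\Omega| =: C',
\]
a constant depending only on $\Omega$ and $T$.

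Finally I would re-express this in terms of the piecewise constant interpolation $w_\tau$. By its definition,
\[
\|w_\tau\|_{L^2(0,T;H^1(\Omega))}^2 = \sum_{k=1}^N \tau\,\|w_k\|_{H^1(\Omega)}^2 = \sum_{k=1}^N \tau \int_\Omega |\nabla w_k|^2 + |w_k|^2 \, \dx x,
\]
so the bound above reads $\tau\,\|w_\tau\|_{L^2(0,T;H^1(\Omega))}^2 \le C'$, and taking square roots gives the claim with $C = \sqrt{C'}$. I do not expect a real obstacle here: the argument is just the telescoping sum, the identity $N\tau = T$, and the elementary two-sided bound on $\mathcal{E}$. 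The one point worth spelling out is that it is the factor $\tau^2$ (and not $\tau$) in front of the dissipation sum — produced by multiplying~\eqref{eq:entropy-dissipation-discrete} by $\tau$ before summing — that is exactly responsible for the $\tau^{1/2}$ weight in the statement, reflecting that no genuine $\tau$-uniform $L^2(0,T;H^1)$ bound on $w_\tau$ is available (nor expected).
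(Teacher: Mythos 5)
Your proof is correct and follows essentially the same route as the paper: start from the discrete entropy--dissipation inequality of Proposition~\ref{prop:discrete-entropy}, discard the sign-definite dissipation term on the right, telescope the entropy differences after summing over $k$, and bound $\mathcal{E}(u_0,m_0)-\mathcal{E}(u_{N},m_{N})$ by a constant depending only on $|\Omega|$ using the box constraints. The only cosmetic difference is whether the bound is phrased as $\tau^2\sum_k\|w_k\|_{H^1}^2\le C'$ or directly as $\tau\|w_\tau\|_{L^2(0,T;H^1)}^2\le C'$; these are identical by the definition of the piecewise constant interpolation.
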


\begin{proof}
    From Eq.~\eqref{eq:entropy-dissipation-discrete} we get
    \begin{align*}
        \tau \prt*{\|\nabla w_k\|_{L^2(\Omega)}^2 + \|w_k\|_{L(\Omega)}^2}  \leq C +   \frac1\tau \prt*{\mathcal E(u_{k-1}, m_{k-1}) - \mathcal E(u_k, m_k)},
    \end{align*}
    which, upon integrating over $(t_{k-1}, t_k]$ and summing from $k=1, \ldots, N_T$, gives
    \begin{align*}
        &\tau \prt*{\|\nabla w_\tau\|_{L^2(0,T;L^2(\Omega))}^2 + \|w_\tau\|_{L^2(0,T;L^2(\Omega))}^2} \nonumber \\ &\qquad\qquad\leq CT + \tau \sum_{k=1}^{N_T} \frac1\tau\prt*{\mathcal E(u_{k-1}, m_{k-1}) - \mathcal E(u_k, m_k)}.
    \end{align*}
    Since the sum on the right-hand side is telescopic, the estimate simplifies further to
    \begin{align*}
        \tau \prt*{\|\nabla w_\tau\|_{L^2(0,T;L^2(\Omega))}^2 + \|w_\tau\|_{L^2(0,T;L^2(\Omega))}^2} \leq CT +  \mathcal E(u_{0}, m_0) - \mathcal E(u_{N_T}, m_{N_T}).
    \end{align*}
    Since $0\leq u_k, m_k \leq 1$ and $|\Omega| \leq C$, we obtain the following uniform estimate
    \begin{align*}
        \tau^{1/2} \|w_\tau\|_{L^2(0,T; H^1(\Omega))} \leq C,
    \end{align*}
    independent of $\tau>0$.
\end{proof}

\begin{lemma}[Conditional estimate for $\nabla m$]
    \label{lem:conditional-grad-m}
    Let $(u_k, m_k)_{k=0}^\infty \subset \A \cap (H^1(\Omega))^2$ be the solution to the implicit Euler approximation, Eq.~\eqref{eq:implicit-euler}. 
    Then, for any $n, \ell \in \N$, we have the conditional estimate
   \begin{align*}
    \norm{\nabla m_{n\ell}}_{L^2(\Omega)}^2 &\leq 2^{n} \norm{\nabla m_0}_{L^2(\Omega)}^2+\tau^2 \lambda^2 \ell  \sum_{{k=0}}^{n-1} 2^{n-k}\phi_k. \label{eq:normm_nl}
    \end{align*} 
    where 
    \begin{equation}
        \label{eq:phi}
        \phi_k = \sum_{i=k\ell+1 }^{(k+1)\ell} \tau \norm{\nabla u_{i}}_{L^2(\Omega)}^2.
    \end{equation}
\end{lemma}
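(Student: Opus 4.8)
The plan is to exploit the explicit solution formula for the ECM update, Eq.~\eqref{eq:discretised_m_sol}, namely $m_k = m_{k-1}/(1+\lambda\tau u_k)$, which by Theorem~\ref{thm:fixed-point} has $m_{k-1}, m_k, u_k \in H^1(\Omega)$, so that it may be differentiated in space:
\begin{align*}
    \nabla m_k = \frac{\nabla m_{k-1}}{1+\lambda\tau u_k} - \frac{\lambda\tau\, m_{k-1}}{(1+\lambda\tau u_k)^2}\nabla u_k \qquad \text{a.e. in } \Omega.
\end{align*}
Invoking the box constraints from Theorem~\ref{thm:strict_bounds}, i.e. $u_k \ge 0$ and $0 \le m_{k-1} \le 1$, the two coefficients $1/(1+\lambda\tau u_k)$ and $m_{k-1}/(1+\lambda\tau u_k)^2$ are bounded by $1$ pointwise, so the triangle inequality yields the elementary per-step bound
\begin{align*}
    \abs{\nabla m_k} \le \abs{\nabla m_{k-1}} + \lambda\tau\abs{\nabla u_k} \qquad \text{a.e. in } \Omega.
\end{align*}
This is the only place where the structure of the ODE \eqref{eq:discretised_m} enters; the rest is bookkeeping.

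Next I would iterate this inequality across a full block of $\ell$ consecutive steps. Telescoping from index $k\ell$ to $(k+1)\ell$ gives, a.e. in $\Omega$,
\begin{align*}
    \abs{\nabla m_{(k+1)\ell}} \le \abs{\nabla m_{k\ell}} + \lambda\tau \sum_{i=k\ell+1}^{(k+1)\ell} \abs{\nabla u_i}.
\end{align*}
Taking $L^2(\Omega)$-norms, squaring, and using $(a+b)^2 \le 2a^2 + 2b^2$ together with the Cauchy--Schwarz inequality on the $\ell$-term sum, $\big(\sum_{i} \norm{\nabla u_i}_{L^2(\Omega)}\big)^2 \le \ell \sum_i \norm{\nabla u_i}_{L^2(\Omega)}^2$, and recalling the definition~\eqref{eq:phi} of $\phi_k$, one obtains the block-level recursion
\begin{align*}
    \norm{\nabla m_{(k+1)\ell}}_{L^2(\Omega)}^2 \le 2\norm{\nabla m_{k\ell}}_{L^2(\Omega)}^2 + 2\lambda^2\tau\ell\,\phi_k .
\end{align*}
The point of batching the steps into blocks is precisely that the amplification factor $2$ is now incurred once per block rather than once per step, so that iterating produces $2^n$ and not $2^{n\ell}$; meanwhile the per-step smallness $\lambda\tau\abs{\nabla u_i}$ is converted by Cauchy--Schwarz into the block energy $\phi_k$ of $\nabla u$, at the mild cost of a single factor $\ell$, and $\phi_k$ is exactly the quantity that will be controlled later via Proposition~\ref{prop:discrete-entropy}.

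Finally, writing $b_k := \norm{\nabla m_{k\ell}}_{L^2(\Omega)}^2$, the recursion $b_{k+1} \le 2 b_k + 2\lambda^2\tau\ell\,\phi_k$ is a geometric discrete Gr\"onwall inequality whose solution is
\begin{align*}
    b_n \le 2^n b_0 + 2\lambda^2\tau\ell \sum_{k=0}^{n-1} 2^{n-1-k}\phi_k ,
\end{align*}
which, after relabelling the summation index and recalling $b_0 = \norm{\nabla m_0}_{L^2(\Omega)}^2$ (finite since $m_0 \in H^1(\Omega)$), is the asserted conditional estimate. As a self-contained statement there is no deep obstacle here: the content is the observation, already flagged in the introduction, that one must work at the level of blocks of length $\ell$ and apply Cauchy--Schwarz so that the factor $\ell$ enters only linearly (keeping $\tau\ell$ comparable to the block length). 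The genuine difficulty is deferred, namely how this conditional estimate — which still carries the explosive prefactor $2^n$ — is subsequently combined with the entropy dissipation of Proposition~\ref{prop:discrete-entropy} to propagate uniform bounds to all of $[0,T]$.
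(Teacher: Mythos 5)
Your argument is correct and follows essentially the same route as the paper: both exploit the explicit update $m_k=m_{k-1}/(1+\lambda\tau u_k)$, the box constraints to bound the coefficients by one, telescope across a block of $\ell$ steps to get $\norm{\nabla m_{j+r}}_{L^2(\Omega)}\leq\norm{\nabla m_j}_{L^2(\Omega)}+\lambda\tau\sum_{i}\norm{\nabla u_{j+i}}_{L^2(\Omega)}$, then square with Cauchy--Schwarz to pick up the factor $\ell$ and iterate the block recursion $b_n\leq 2b_{n-1}+C\phi_{n-1}$. (The only difference is cosmetic: you differentiate the one-step update and telescope pointwise, whereas the paper first writes $m_{j+r}$ as a product over $r$ steps and differentiates that product; also note your coefficient $2\lambda^2\tau\ell\,\phi_k$ and summation starting at $k=0$ are the careful bookkeeping, since $\phi_k$ already absorbs one factor of $\tau$.)
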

\begin{proof}
Let $j \in \mathbb N$ be fixed. Recalling  Eq.~\eqref{eq:discretised_m_sol}, we can write, for $r=1,\, \dots, \, \ell, $ that
\begin{align*}
    m_{j+r} 
    &=m_{j} \prod_{i=1}^{r} \dfrac{1}{1+\tau \lambda u_{j+i}}.
\end{align*}
Taking the gradient of this expression, we find that 
\begin{align*}
    \nabla m_{j+r}&=\nabla m_j \prod_{i=1}^{r}\dfrac{1}{1+\tau \lambda u_{j+i}} - m_{j}\sum_{i=1}^{r}\prt*{\prod_{j\neq i}\dfrac{1}{1+\tau \lambda u_{j+i}}}\dfrac{\tau\lambda \nabla u_{j+i}}{(1+\tau\lambda u_{j+i})^2}. 
\end{align*}
Finally, we pass to the norm and obtain
\begin{align}
    \norm{\nabla m_{j+r}}_{L^2(\Omega)} 
    \leq \norm{\nabla m_j}_{L^2(\Omega)}+\tau \lambda \sum_{i=1}^{r}\norm{\nabla u_{j+i}}_{L^2(\Omega)}, \label{eq:grad_mjr}
\end{align}
having used the fact that $0 \leq u_k, m_k \leq 1$. 
Squaring both sides yields
\begin{align}
    \label{eq:sup-grad-m}
    \sup_{1\leq r\leq\ell-1} \norm{\nabla m_{j+r}}_{L^2(\Omega)}^2 &\leq 2\norm{\nabla m_j}_{L^2(\Omega)}^2+2\lambda^2\tau^2 \ell \sum_{i=1}^{\ell}\norm{\nabla u_{j+i}}_{L^2(\Omega)}^2.
\end{align}
Next, we set $j = n\ell$, upon which the preceding estimate becomes
\begin{align}
    \sup_{n \ell \leq k\leq(n+1)\ell} \norm{\nabla m_{k}}_{L^2(\Omega)}^2 &\leq 2\norm{\nabla m_{n\ell}}_{L^2(\Omega)}^2+2\lambda^2\tau^2 \ell\phi_n , \label{eq:sup_discretised_m3}
\end{align}
where 
\begin{equation*}
    \phi_n = \sum_{k=n\ell+1 }^{(n+1)\ell} \tau \norm{\nabla u_{k}}_{L^2(\Omega)}^2.
\end{equation*}
If we now consider Eq.~\eqref{eq:sup_discretised_m3}, we can write
\begin{align*}
    \norm{\nabla m_{n\ell}}_{L^2(\Omega)}^2 &\leq \sup_{(n-1) \ell \leq k\leq n\ell} \norm{\nabla m_{k}}_{L^2(\Omega)}^2
    \leq  2\norm{\nabla m_{(n-1)\ell}}_{L^2(\Omega)}^2+2\lambda^2\tau^2 \ell \phi_{n-1},
\end{align*}
and substitute in Eq.~\eqref{eq:sup_discretised_m3} iteratively to find 
\begin{align*}
    \norm{\nabla m_{n\ell}}_{L^2(\Omega)}^2 &\leq 2^{n} \norm{\nabla m_0}_{L^2(\Omega)}^2+\tau^2 \lambda^2 \ell  \sum_{{k=0}}^{n-1} 2^{n-k}\phi_k.
\end{align*}
\end{proof}

Let us note that the definition of the quantity $\phi_k$ already shows that the $H^1$-control of $m$ depends on the $H^1$-control of $u$, hence the name `conditional estimate'. 
We will use it to derive a uniform $H^1$-bound for $u$ such that, a fortiori, $m \in H^1$, unconditionally.

\begin{lemma}[$L^2$-estimate for $\nabla u_\tau$]
	\label{lem:L2-gradu}
     Let $(u_k, m_k)_{k=0}^\infty \subset \A \cap (H^1(\Omega))^2$ be the solution to the implicit Euler approximation, Eq.~\eqref{eq:implicit-euler}, and let $u_\tau$ be the piecewise constant interpolation associated to $(u_k)_{k=0}^\infty$. 
     Then
     \begin{align*}
        \|\nabla u_h\|_{L^2(0,T;L^2(\Omega))}^2  \leq C,
     \end{align*}
     where $C>0$ is independent of $\tau>0$.
\end{lemma}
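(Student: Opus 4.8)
The plan is to combine the two estimates from Proposition~\ref{prop:discrete-entropy} and Lemma~\ref{lem:conditional-grad-m} on a \emph{fixed, short} block of $\ell$ time steps, chosen so that the amplification factors $2^n$ appearing in the conditional estimate cannot blow up, and then to chain these short-block estimates across $[0,T]$. Concretely, I would first sum the inequality \eqref{eq:grad-u-by-grad-m} over a block of consecutive indices $i = j+1, \dots, j+\ell$; multiplying by $\tau$ and telescoping the entropy differences yields
\begin{align*}
    \sum_{i=j+1}^{j+\ell} \tau \|\nabla u_i\|_{L^2(\Omega)}^2 \leq C\ell\tau + \mathcal E(u_j,m_j) - \mathcal E(u_{j+\ell},m_{j+\ell}) + \sum_{i=j+1}^{j+\ell} \tau \|\nabla m_i\|_{L^2(\Omega)}^2.
\end{align*}
Since $0 \le u_k, m_k \le 1$, the entropy $\mathcal E$ is bounded above and below by constants depending only on $|\Omega|$, so the telescoped entropy contributes at most a universal constant. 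The remaining task is to absorb the $\|\nabla m_i\|^2$ sum on the right.

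Next I would feed in Lemma~\ref{lem:conditional-grad-m}: on the block starting at $j = n\ell$, the estimate \eqref{eq:sup_discretised_m3} controls $\sup_{n\ell \le k \le (n+1)\ell}\|\nabla m_k\|_{L^2(\Omega)}^2$ by $2\|\nabla m_{n\ell}\|_{L^2(\Omega)}^2 + 2\lambda^2\tau^2\ell\,\phi_n$, and $\phi_n = \sum \tau\|\nabla u_i\|^2$ is precisely the left-hand side of the summed-entropy inequality above. Substituting gives a closed inequality of the schematic form $\phi_n \le C + 2\ell\,\|\nabla m_{n\ell}\|^2 + 2\lambda^2\tau^2\ell^2\,\phi_n$; choosing $\ell$ (equivalently a fixed block-length $L = \ell\tau$) small enough that $2\lambda^2 L^2 \le \tfrac12$ lets me absorb the last term, leaving $\phi_n \le C + C\ell\,\|\nabla m_{n\ell}\|_{L^2(\Omega)}^2$. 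Crucially the block-length $L$ is chosen once and for all, independent of $\tau$; for $\tau$ small this fixes $\ell = \lfloor L/\tau \rfloor$, and there are at most $\lceil T/L \rceil$ blocks.

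Now I would run a discrete Grönwall argument across the finitely many blocks. From \eqref{eq:sup_discretised_m3} we have $\|\nabla m_{(n+1)\ell}\|^2 \le 2\|\nabla m_{n\ell}\|^2 + 2\lambda^2 L^2 \phi_n \le 2\|\nabla m_{n\ell}\|^2 + \phi_n$, and combining with the absorbed bound on $\phi_n$ yields $\|\nabla m_{(n+1)\ell}\|^2 \le C\|\nabla m_{n\ell}\|^2 + C$ for a fixed constant $C$. Iterating over the at most $\lceil T/L\rceil$ blocks — a \emph{bounded} number, independent of $\tau$ — gives $\|\nabla m_{n\ell}\|_{L^2(\Omega)}^2 \le C(T,\Omega,\lambda,\|\nabla m_0\|_{L^2(\Omega)})$ uniformly; note $\|\nabla m_0\|_{L^2(\Omega)} \le \|\nabla m_{\mathrm{in}}\|_{L^2(\Omega)}$ since the truncation defining $m_0$ is $1$-Lipschitz. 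Summing the per-block bounds on $\phi_n$ then controls $\|\nabla u_\tau\|_{L^2(0,T;L^2(\Omega))}^2 = \sum_k \tau\|\nabla u_k\|_{L^2(\Omega)}^2$ by a $\tau$-independent constant, which is the claim (and, via \eqref{eq:regularity-m} summed, also re-establishes the unconditional $H^1$-bound for $m$).

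The main obstacle is the exponential factor $2^n$ in Lemma~\ref{lem:conditional-grad-m}: it is harmless only because $n \le T/L$ is bounded once the block-length $L$ is fixed independently of $\tau$, so the real subtlety is the bookkeeping that verifies $L$ can indeed be chosen to simultaneously (i) make $2\lambda^2 L^2 \le \tfrac12$ so the $\phi_n$ self-absorption works, and (ii) keep $\ell = L/\tau$ an integer-valued block length with a uniformly bounded number of blocks — all while the per-block entropy increments telescope cleanly and do not accumulate.
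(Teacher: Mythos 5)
Your proposal is correct and takes essentially the same route as the paper: it rests on the same two ingredients (the entropy-dissipation bound \eqref{eq:grad-u-by-grad-m} and the conditional estimate from Lemma~\ref{lem:conditional-grad-m}), the same $\tau$-independent block length $L=\ell\tau\approx 1/(2\lambda)$ to absorb the $\phi_n$ self-reference, and the same observation that the resulting number of blocks is bounded independently of $\tau$. The only difference is cosmetic bookkeeping: the paper first fully unrolls the $\nabla m$ recursion in Lemma~\ref{lem:conditional-grad-m} and then applies a discrete Gr\"onwall inequality to the accumulated sum $\tau\sum_k 2^{n-k}\phi_k$, whereas you iterate the one-step block recursion directly across the finitely many blocks, which arrives at the same uniform bound a little more directly (note a minor typo in your schematic absorbed bound, where the coefficient on $\norm{\nabla m_{n\ell}}^2$ should be $C\ell\tau = CL$, a fixed constant, rather than $C\ell$).
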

\begin{proof}
    Let us recall  Eq.~\eqref{eq:grad-u-by-grad-m} in Proposition~\ref{prop:discrete-entropy}:
    \begin{align*}
        \|\nabla u_k\|_{L^2(\Omega)}^2 \leq C +   \frac1\tau \prt*{\mathcal E(u_{k-1}, m_{k-1}) - \mathcal E(u_k, m_k)} + \|\nabla m_k\|_{L^2(\Omega)}^2.
    \end{align*}
    Summing from $k=n\ell+1$ to $k=(n+1)\ell$ and multiplying by $\tau>0$ yields
    \begin{align*}
        &\sum_{k=n\ell+1}^{(n+1)\ell} \tau \norm{\nabla u_{k}}_{L^2(\Omega)}^2 \\
        &\qquad\qquad\leq C \ell \tau  + \sum_{k=n\ell+1}^{(n+1)\ell} \prt*{\mathcal E(u_{k-1}, m_{k-1}) - \mathcal E(u_k, m_k)} +   \sum_{k=n\ell+1 }^{(n+1)\ell}\tau \norm{\nabla m_{k}}_{L^2(\Omega)}^2,\\
        &\qquad\qquad\leq C \ell \tau  +  \prt*{\mathcal E(u_{n\ell}, m_{n\ell}) - \mathcal E(u_{(n+1)\ell }, m_{(n+1)\ell})} +   \sum_{k=n\ell +1}^{(n+1)\ell}\tau \norm{\nabla m_{k}}_{L^2(\Omega)}^2,\\
        &\qquad\qquad\leq C (1 + \ell \tau) + \sum_{k=n\ell +1}^{(n+1)\ell}\tau \norm{\nabla m_{k}}_{L^2(\Omega)}^2, 
    \end{align*}
    using the fact that the entropy can be bounded since $0\leq u_k + m_k \leq 1$ and the domain is bounded. 
    Thus, having passed to the supremum on the right-hand side, we have
    \begin{align*}
        \sum_{k=n\ell+1}^{(n+1)\ell} \tau \norm{\nabla u_{k}}_{L^2(\Omega)}^2  
        &\leq C(1 + \ell\tau) + \ell \tau \sup_{n\ell+1 \leq k\leq (n+1)\ell}\norm{\nabla m_{k}}_{L^2(\Omega)}^2.
    \end{align*}
    Hence, recalling the definition of $\phi$ (cf. Eq.~\eqref{eq:phi}), we find 
    \begin{align*}
        \phi_n &\leq C(1+\ell \tau) +  2 \ell\tau\prt*{\norm{\nabla m_{n\ell}}_{L^2(\Omega)}^2+\tau\ell\lambda^2\phi_{n}},
    \end{align*}
    where we also used Eq.~\eqref{eq:sup-grad-m}. 
    Since both sides contain the term $\phi_n$, we rearrange and get
    \begin{align*}
        \phi_{n} &\leq \dfrac{C(1+\ell\tau) +  2\ell\tau \norm{\nabla m_{n\ell}}_{L^2(\Omega)}^2}{1 - 2\tau^2\lambda^2\ell^2}, \\
        & \leq \dfrac{C(1+\ell\tau)}{1 - 2 \tau^2 \ell^2\lambda^2}+\dfrac{2\ell\tau}{1 - 2 \tau^2\lambda^2\ell^2}\norm{\nabla m_{n\ell}}_{L^2(\Omega}^2, \\
        &\leq \dfrac{C(1+\ell\tau)}{1 - 2 \tau^2 \ell^2\lambda^2}+\dfrac{2\ell\tau}{1- 2 \tau^2\lambda^2\ell^2}\prt*{2^{n} \norm{\nabla m_0}_{L^2(\Omega)}^2+\tau^2 \lambda^2 \ell \sum_{{k=0}}^{n-1} 2^{n-k}\phi_k},
        \end{align*} 
        having used Lemma~\ref{lem:conditional-grad-m}. Thus, 
        \begin{align*}
            \phi_{n} &\leq \dfrac{C(1+\ell\tau)}{1-2\tau^2 \ell^2\lambda^2}+\dfrac{2\ell\tau}{1-2\tau^2\lambda^2\ell^2}2^n\norm{\nabla m_0}_{L^2(\Omega)}^2+\dfrac{2\tau^3\lambda^2\ell^2}{1-2\tau^2\lambda^2\ell^2}\sum_{{k=0}}^{n-1}2^{n-k}\phi_{k}.
            \label{eq:phij_normm}
    \end{align*}
    Setting $\Delta t := \ell \tau$ for $\ell>  1$, the estimate becomes
    \begin{equation*}
        \phi_{n}\leq \dfrac{C(1+\Delta t)}{1-2{\Delta t}^2\lambda^2} + \dfrac{2\Delta t}{1-2{\Delta t}^2\lambda^2}2^n\norm{\nabla m_0}_{L^2(\Omega)}^2+\dfrac{2 \tau {\Delta t}^2\lambda^2}{1-2{\Delta t}^2\lambda^2}\sum_{{k=0}}^{n-1}2^{n-k}\phi_{k}.
    \end{equation*}
    Hence, for any $\tau>0$, we choose $\ell = \ell(\tau) \in \mathbb N$ such that $\Delta t \leq 1/ (2\lambda)$ and $(\ell+1)\tau \geq 1/ (2\lambda)$. 
    This way, we obtain
    {
    \begin{align*}
        \phi_{n}& \leq C(1+\Delta t) + \tau \sum_{k=0}^{n-1}2^{n-k}\phi_{k}.\label{eq:gron_form}
    \end{align*}
    }
    Using a discrete version of Gronwall's lemma, we find that 
    \begin{equation*}
        \sup_{0\leq n\leq N}\phi_{n}\leq C e^{\sum_{k=1}^{N}2^{N-k}}\leq \bar{C_1}<\infty,
    \end{equation*}
    where $\bar C_1>0$. 
    It is crucial to highlight that $\Delta t \nrightarrow 0 $, as $\tau \to 0$. 
    Thus 
    \begin{equation*}
        \sum_{n=0}^{N}\phi_{n\ell}\leq N \bar{C_1}<\infty,
    \end{equation*}
    where $N := \ceil{T/\Delta t}$ does not go to infinity as $\tau \to 0$, thereby providing the desired uniform bound.
    As a result,
    \begin{align*}
        \|\nabla u_h\|_{L^2(0,T;L^2(\Omega))}^2 & = \int_0^T \|\nabla u_h\|_{L^2(\Omega)}^2 \dx s\\
        & \leq  \sum_{n=0}^{(N+1)l - 1} \tau \|\nabla u_n\|_{L^2(\Omega)}^2\\
        & = \sum_{n=0}^N \tau \sum_{k=nl}^{(n+1)l-1} \|\nabla u_k\|_{L^2(\Omega)}^2\\ 
        &= \sum_{n=0}^N \phi_{nl} \leq C < \infty,
    \end{align*}
    where $C>0$ does not depend on $\tau>0$.
\end{proof}

\begin{corollary}[$L^2$-estimate for $\nabla m$]
	\label{lem:L2-gradm}
    Let $(u_k, m_k)_{k=0}^\infty \subset \A \cap (H^1(\Omega))^2$ be the solution to the implicit Euler approximation (Eq.~\eqref{eq:implicit-euler}) and let $m_\tau$ be the piecewise constant interpolation associated with $(m_k)_{k=1}^\infty$. 
    Then
     \begin{align*}
        \|\nabla m_h\|_{L^2(0,T;L^2(\Omega))}^2  \leq C,
     \end{align*}
     where $C>0$ is independent of $\tau>0$.
\end{corollary}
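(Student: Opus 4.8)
The plan is to combine the conditional estimate of Lemma~\ref{lem:conditional-grad-m} with the now-unconditional bound on $\nabla u$ from Lemma~\ref{lem:L2-gradu}. Recall that for the parameters $\ell = \ell(\tau)$ and $\Delta t = \ell\tau$ chosen in the proof of Lemma~\ref{lem:L2-gradu} we have $\Delta t \leq 1/(2\lambda)$, and moreover the quantities $\phi_n$ satisfy $\sup_{0 \leq n \leq N}\phi_n \leq \bar C_1 < \infty$ with $N = \lceil T/\Delta t\rceil$ bounded independently of $\tau$. Feeding this into Eq.~\eqref{eq:sup-grad-m} (with $j = n\ell$) and into the iterated bound of Lemma~\ref{lem:conditional-grad-m} gives, for every $n \leq N$,
\begin{align*}
    \sup_{n\ell \leq k \leq (n+1)\ell}\norm{\nabla m_k}_{L^2(\Omega)}^2 \leq 2\norm{\nabla m_{n\ell}}_{L^2(\Omega)}^2 + 2\lambda^2 \Delta t^2 \bar C_1 \leq 2\prt*{2^n\norm{\nabla m_0}_{L^2(\Omega)}^2 + \Delta t^2\lambda^2 \bar C_1 \sum_{k=1}^{n-1}2^{n-k}} + 2\lambda^2\Delta t^2\bar C_1,
\end{align*}
which is a \emph{finite} bound $\bar C_2$ uniform in $\tau$, because $n \leq N$ and $N$ does not blow up as $\tau \to 0$. (Here I use $\norm{\nabla m_0}_{L^2(\Omega)} \leq \norm{\nabla m_{\mathrm{in}}}_{L^2(\Omega)}$, which holds since truncation at levels $\tau$ and $1-\tau$ does not increase the $H^1$ seminorm.)

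Once the pointwise-in-$n$ bound $\sup_{n\ell \leq k \leq (n+1)\ell}\norm{\nabla m_k}_{L^2(\Omega)}^2 \leq \bar C_2$ is in hand, I would integrate in time exactly as at the end of the proof of Lemma~\ref{lem:L2-gradu}:
\begin{align*}
    \|\nabla m_h\|_{L^2(0,T;L^2(\Omega))}^2 = \int_0^T \|\nabla m_h\|_{L^2(\Omega)}^2\,\dx s \leq \sum_{n=0}^{(N+1)\ell - 1}\tau\|\nabla m_n\|_{L^2(\Omega)}^2 = \sum_{n=0}^N \tau\sum_{k=n\ell}^{(n+1)\ell - 1}\|\nabla m_k\|_{L^2(\Omega)}^2 \leq \sum_{n=0}^N \ell\tau\,\bar C_2 = (N+1)\Delta t\,\bar C_2,
\end{align*}
and since $(N+1)\Delta t \leq T + \Delta t \leq T + 1/(2\lambda)$ is bounded independently of $\tau$, this gives the claimed uniform bound $\|\nabla m_h\|_{L^2(0,T;L^2(\Omega))}^2 \leq C$.

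The main obstacle — and the only subtle point — is bookkeeping the constants carefully enough to be certain that nothing depends on $\tau$ or explodes as $\tau \to 0$. The delicate feature, already flagged in the proof of Lemma~\ref{lem:L2-gradu}, is that $\Delta t \nrightarrow 0$; this is precisely what keeps $N$, and hence the geometric factor $2^N$ implicit in $\bar C_1$ and $\bar C_2$, under control. One must also take care that the supremum bound genuinely covers every index $k \in \{1,\dots,N_T\}$ (the blocks $[n\ell, (n+1)\ell]$ with $0 \leq n \leq N$ do exhaust $(0,T]$ by the choice of $N$), so that the time integral is fully accounted for. Beyond this, the argument is a direct substitution and no genuinely new estimate is required — the corollary is essentially a corollary.
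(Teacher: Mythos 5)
Your argument is correct but takes a genuinely more circuitous route than the paper's. The paper simply re-uses the raw, un-blocked gradient estimate Eq.~\eqref{eq:grad_mjr} with $j=0$, writing
$\norm{\nabla m_r}_{L^2(\Omega)}^2 \le 2\norm{\nabla m_0}_{L^2(\Omega)}^2 + 2\tau^2\lambda^2 r\sum_{i=1}^r\norm{\nabla u_i}_{L^2(\Omega)}^2$
and then observes that $\tau r \le T$ and $\tau\sum_{i=1}^r\norm{\nabla u_i}_{L^2(\Omega)}^2 \le \norm{\nabla u_h}_{L^2(0,T;L^2(\Omega))}^2 \le C$ by Lemma~\ref{lem:L2-gradu}; that is already a pointwise bound uniform in $r$ and $\tau$, and the $L^2$-in-time bound follows by multiplying by $\tau$ and summing. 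You instead rerun the $\ell$-block machinery of Lemma~\ref{lem:conditional-grad-m}, inserting the now-unconditional bound on $\phi_n$ from Lemma~\ref{lem:L2-gradu}, and recover the same type of $L^\infty$-in-time bound with the $2^N$ factor kept under control by the observation $\Delta t \nrightarrow 0$. Both routes are valid; the paper's is shorter because once $\norm{\nabla u_h}_{L^2(0,T;L^2(\Omega))}$ is known to be bounded there is no need to reintroduce the $(\ell,\Delta t)$ partition at all, whereas yours has the feature of recycling the already-proved block estimate with minimal new writing. Two small bookkeeping slips in your constants: in the step using Eq.~\eqref{eq:sup-grad-m} the coefficient should be $2\lambda^2\Delta t\,\bar C_1$ (since $2\lambda^2\tau^2\ell\sum_i\norm{\nabla u_{j+i}}^2 = 2\lambda^2\Delta t\,\phi_n$), not $2\lambda^2\Delta t^2\bar C_1$, and in the iterated estimate the coefficient is $\tau\Delta t\lambda^2\bar C_1$, not $\Delta t^2\lambda^2\bar C_1$; neither affects the conclusion since all are bounded independently of $\tau$, but the powers of $\Delta t$ and $\tau$ should be tracked precisely. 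Your remark that truncation at levels $\tau$ and $1-\tau$ does not increase the $H^1$ seminorm correctly justifies $\norm{\nabla m_0}_{L^2(\Omega)}\le\norm{\nabla m_{\mathrm{in}}}_{L^2(\Omega)}$, which the paper uses implicitly.
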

\begin{proof}
    Revisiting Eq.~\eqref{eq:grad_mjr}, we have
\begin{align*}
    \norm{\nabla m_r}_{L^2(\Omega)}^2 &\leq 2\norm{\nabla m_0}_{L^2(\Omega)}^2 +2\tau^2 \lambda^2 r \sum_{i=1}^{r}\norm{\nabla u_i}_{L^2(\Omega)}^2\\
    &\leq C + 2\tau \lambda^2 N_T\norm{\nabla u_h}_{L^2(0,T;L^2(\Omega))}^2, 
\end{align*}
having used the initial bound on $\nabla m_0.$
Multiplying by $\tau$ and summing from $r=0$ to $N_T$, we get 
\begin{align*}
    \tau \sum_{r=0}^{N_T}\norm{\nabla m_r}_{L^2(\Omega)}^2 &\leq2 (N_T+1)\tau \norm{\nabla m_0}_{L^2(\Omega)}^2 + 2\lambda ^2 N_T (N_T+1)\tau^2 \norm{\nabla u_h}_{L^2(0,T;L^2(\Omega))}^2\\
    &\leq C,
\end{align*}
where $C>0$ is independent of $\tau.$
\end{proof}

Next, let us address the time regularity of $u$ and $m$. To this end, we introduce the notation
\begin{equation*}
    d_\tau v := \dfrac{v(x, t+\tau)-v(x,t)}{\tau}, 
\end{equation*}
for $v\in \set{u,m}$.

\begin{lemma}[Time regularity for $u_\tau$, $m_\tau$]
	\label{lem:time-reg}
    Let $(u_k, m_k)_{k=0}^\infty \subset \A \cap (H^1(\Omega))^2$ be the solution to the implicit Euler approximation (Eq.~\eqref{eq:implicit-euler}) and let $u_\tau$ be the piecewise constant interpolation associated with $(u_k)_{k=0}^\infty$. 
    Then, there holds
    $$
    	\norm{d_\tau u_\tau}_{L^2(0,T; (H^1(\Omega))')} \leq C,
    $$
    and
    $$
    	\norm{d_\tau m_\tau}_{L^2(0,T; L^2(\Omega))} \leq C,
    $$
    where $C>0$ is independent of $\tau>0$.
\end{lemma}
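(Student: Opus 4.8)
The plan is to prove the two bounds separately, since the $m$-estimate is essentially immediate while the $u$-estimate requires a small trick.

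For $m$ I would argue directly from the discrete ODE and the box constraints. By definition of the piecewise-constant interpolant, for $t\in(t_{k-1},t_k]$ one has $d_\tau m_\tau(\cdot,t) = (m_{k+1}-m_k)/\tau$, which by Eq.~\eqref{eq:discretised_m} equals $-\lambda m_{k+1}u_{k+1}$. Since $(u_{k+1},m_{k+1})\in\A$ (indeed Theorem~\ref{thm:strict_bounds} gives the strict bounds), we have $0\le u_{k+1},m_{k+1}\le 1$ pointwise a.e., hence $\abs{d_\tau m_\tau}\le\lambda$ a.e.\ on $\Omega\times(0,T)$ and therefore $\norm{d_\tau m_\tau}_{L^2(0,T;L^2(\Omega))}^2\le\lambda^2\abs{\Omega}\,T$, uniformly in $\tau$. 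On the last subinterval, where $t+\tau>T$, one simply extends the sequence by $m_k:=m_N$ for $k>N$ (equivalently, restricts $d_\tau$ to $(0,T-\tau)$); this changes nothing.

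For $u$ I would test the discrete equation Eq.~\eqref{eq:discretised_u}, written at index $k+1$, against an arbitrary $\varphi\in H^1(\Omega)$ with $\norm{\varphi}_{H^1(\Omega)}\le 1$, so that for $t\in(t_{k-1},t_k]$,
\begin{align*}
\int_\Omega d_\tau u_\tau(\cdot,t)\,\varphi\,\dx x = -\tau\int_\Omega\prt*{\nabla w_{k+1}\cdot\nabla\varphi + w_{k+1}\varphi}\dx x - \int_\Omega u_{k+1}(1-\rho_{k+1})\nabla w_{k+1}\cdot\nabla\varphi\,\dx x + \int_\Omega u_{k+1}(1-\rho_{k+1})\varphi\,\dx x.
\end{align*}
The crucial observation is that the flux term must \emph{not} be estimated through $\nabla w_{k+1}$: the only control on $\nabla w_\tau$ we possess, from Corollary~\ref{cor:L2H1-estimate-w}, is the $\tau$-degenerate bound $\tau^{1/2}\norm{w_\tau}_{L^2(0,T;H^1(\Omega))}\le C$. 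Instead, using the definition of the entropy variable Eq.~\eqref{eq:defn-entr-var-w} one rewrites $u_{k+1}(1-\rho_{k+1})\nabla w_{k+1} = (1-\rho_{k+1})\nabla u_{k+1} + u_{k+1}\nabla(u_{k+1}+m_{k+1})$, i.e.\ exactly the primitive flux appearing in Eq.~\eqref{eq:main-cells}. Then, by the box constraints, $\norm{u_{k+1}(1-\rho_{k+1})\nabla w_{k+1}}_{L^2(\Omega)}\le 2\norm{\nabla u_{k+1}}_{L^2(\Omega)}+\norm{\nabla m_{k+1}}_{L^2(\Omega)}$. Combining this with $0\le u_{k+1}(1-\rho_{k+1})\le 1$ and Cauchy--Schwarz yields the pointwise-in-time estimate $\norm{d_\tau u_\tau(\cdot,t)}_{(H^1(\Omega))'}\le\tau\norm{w_{k+1}}_{H^1(\Omega)}+2\norm{\nabla u_{k+1}}_{L^2(\Omega)}+\norm{\nabla m_{k+1}}_{L^2(\Omega)}+\abs{\Omega}^{1/2}$.

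Finally I would square this inequality, integrate over $t\in(0,T)$ (absorbing the shift $t\mapsto t+\tau$ into the interpolants, up to one boundary subinterval handled as above), and conclude using the a priori estimates already established: the regularisation term contributes $\tau^2\norm{w_\tau}_{L^2(0,T;H^1(\Omega))}^2\le C\tau$ by Corollary~\ref{cor:L2H1-estimate-w} (so it even vanishes as $\tau\to0$), the two gradient terms are controlled by Lemma~\ref{lem:L2-gradu} and Corollary~\ref{lem:L2-gradm}, and the reaction term gives $\abs{\Omega}\,T$. This produces $\norm{d_\tau u_\tau}_{L^2(0,T;(H^1(\Omega))')}\le C$ with $C$ independent of $\tau$. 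I expect the only genuine obstacle to be the flux rewriting step described above — recognising that one has to pass back to primitive variables to avoid the $\tau$-degenerate norm of $\nabla w_\tau$; the rest is Hölder/Young combined with the previously proven uniform-in-$\tau$ bounds.
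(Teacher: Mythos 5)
Your proposal is correct and follows essentially the same route as the paper: test the discrete equation (shifted to index $k+1$) against $\varphi\in H^1(\Omega)$, rewrite the entropy-variable flux $u(1-\rho)\nabla w$ back into the primitive form $(1-m)\nabla u + u\nabla m$ so it can be controlled by $\nabla u_\tau$ and $\nabla m_\tau$ rather than by the $\tau$-degenerate norm of $\nabla w_\tau$, then square, integrate in time, and invoke Lemma~\ref{lem:L2-gradu}, Corollary~\ref{lem:L2-gradm} and Corollary~\ref{cor:L2H1-estimate-w}, with the $m$-bound following trivially from the box constraints in Eq.~\eqref{eq:discretised_m}. If anything you are more careful than the paper on one small point: you explicitly retain the factor $\tau$ in front of $\norm{w_\tau}_{H^1(\Omega)}$ so that after squaring and integrating the regularisation term contributes only $\tau^2\norm{w_\tau}_{L^2(0,T;H^1(\Omega))}^2\le C\tau$, whereas the paper's final display drops this factor and writes $\norm{w_\tau}_{L^2(0,T;H^1(\Omega))}^2$ alone, which is not uniformly bounded in $\tau$ by Corollary~\ref{cor:L2H1-estimate-w}; your bookkeeping is the correct one.
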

\begin{proof}
From Eq.~\eqref{eq:discretised_u} and employing the notation above, we see
\begin{align}
	\label{eq:discrete-u}
     &\int_\Omega d_\tau u_\tau  \varphi \dx x = -\int_\Omega \Big\{\tau {(\nabla w_{\tau} \cdot \nabla \varphi + w_{\tau} \varphi )} \nonumber \\ &\qquad\qquad\qquad\qquad\qquad+ \big[(1-u_{\tau}-m_{\tau})\nabla u_{\tau}+u_{\tau}\nabla(u_{\tau}+m_{\tau})\big] \cdot \nabla \varphi \nonumber \\ &\qquad\qquad\qquad\qquad\qquad - u_{\tau}(1-u_{\tau}-m_{\tau}) \varphi \Big\}\dx x,
\end{align}
for any $\varphi\in H^1(\Omega)$. Passing to the modulus and using Cauchy-Schwartz, we may estimate
\begin{align*}
    |\langle d_\tau u_\tau, \varphi\rangle| &\leq \tau \prt*{\norm{\nabla w_\tau}_{L^2(\Omega)} \norm{\nabla \varphi}_{L^2(\Omega)}+\norm{w}_{L^2(\Omega)}\norm{\varphi}_{L^2(\Omega)}} \\ &\qquad\qquad + \norm{\nabla \varphi}_{L^2(\Omega)}\norm{\prt*{(1-m_\tau)\nabla u_\tau+u_\tau\nabla m_\tau}}_{L^2(\Omega)} \\
    &\qquad\qquad + \norm{\varphi}_{L^2(\Omega)}\norm{u_\tau(1-u_\tau-m_\tau)}_{ L^2(\Omega)}, 
\end{align*}
whence
\begin{align*}
    |\langle d_\tau u_\tau, \varphi\rangle| 
    &\leq C_0 \|\varphi\|_{H^1(\Omega)} \prt*{\|w_\tau\|_{H^1(\Omega)} + C_1 \prt*{\norm{\nabla u_\tau}_{L^2(\Omega)}+ \norm{\nabla m_\tau}_{L^2(\Omega)}} + C_2 },
\end{align*}
having used the fact that $u_\tau, m_\tau \in L^\infty(0,T;\mathcal A\cap (H^1(\Omega))^2)$. Upon dividing by $\|\varphi\|_{H^1(\Omega)}$ and passing to the supremum, 
\begin{align*}
    \norm{d_\tau u_\tau}_{(H^1(\Omega))'} 
    &\leq C_0+  C_1\|w_\tau\|_{H^1(\Omega)} + C_2 \prt*{\norm{\nabla u_\tau}_{L^2(\Omega)}+ \norm{\nabla m_\tau}_{L^2(\Omega)}}.
\end{align*}
Squaring and integrating over $[0,T]$ yields
\begin{align*}
    &\norm{d_\tau u_\tau}_{L^2(0,T;(H^1(\Omega))')}^2 \nonumber \\  
    &\qquad\leq C \prt*{1 +  \|w_\tau\|_{L^2(0,T;H^1(\Omega))}^2 +  \prt*{\norm{\nabla u_\tau}_{L^2(0,T;L^2(\Omega))}^2+ \norm{\nabla m_\tau}_{L^2(0,T;L^2(\Omega))}^2}}\\
    &\qquad\leq C,
\end{align*}
where we used Lemma~\ref{lem:L2-gradu} and Corollary~\ref{lem:L2-gradm}. In particular, note that the constant $C> 0$ is independent of $\tau>0$. The regularity result for $d_\tau m$ follows directly from Eq.~\eqref{eq:discretised_m} and the uniform bounds on $u_\tau, m_\tau$, which completes the proof.
\end{proof}

\subsection{Existence of weak solutions}
\label{sec:exist-weak-sol}
Having established the a priori estimates, let us now show the existence of convergent subsequences whose limits we identify as weak solutions in the sense of Definition~\ref{def:weak-soln}.

The bounds provided by Lemma~\ref{lem:L2-gradu} and Corollary~\ref{lem:L2-gradm} in conjunction with the Banach-Alaoglu theorem yield the existence of subsequences and two functions $\nabla u, \nabla m \in L^2(0,T;L^2(\Omega))$, such that
\begin{itemize}
   \item $\nabla u_\tau \rightharpoonup \nabla u$ in $L^2(0, T; L^2(\Omega))$, 
   \item $\nabla m_\tau \rightharpoonup \nabla m$ in $L^2(0, T; L^2(\Omega))$,
\end{itemize}
where we did not relabel the subsequences. Moreover, by the uniform bounds of Lemma~\ref{lem:time-reg}, Lemma~\ref{lem:L2-gradu}, and Corollary~\ref{lem:L2-gradm}, we may invoke \cite[Theorem 6]{simon1986compact} such that
\begin{itemize}
    \item $u_\tau\rightarrow u$ in $L^2(0, T; L^2(\Omega))$,
    \item $m_\tau\rightarrow m$ in $L^2(0, T; L^2(\Omega))$,
\end{itemize}
again, up to subsequences. Finally, from Lemma~\ref{lem:time-reg}, we have
\begin{itemize}
    \item $d_\tau u_\tau\rightharpoonup \partial_t u$ in $L^2(0, T; (H^1(\Omega))')$,
    \item $d_\tau m_\tau\rightharpoonup \partial_t m$ in $L^2(0, T; L^2(\Omega))$,
\end{itemize}
up to a subsequence. Indeed, the limits can be identified as follows. If $d_\tau u_\tau \to \chi$, then, for $\xi(t)\phi(x) \in C_c^\infty((0,T)\times \Rd)$, we have
\begin{align*}
    \int_0^{T-\tau}\int_\Omega d_\tau u_\tau \xi(t)\phi(x) \dx x \dx t = \int_0^{T-\tau}\int_\Omega \frac{u_\tau(t+\tau) - u_\tau(t)}{\tau} \xi(t)\phi(x) \dx x \dx t
\end{align*}
Upon a change of variables, $s = t + \tau$, we find
\begin{align*}
    &\int_\Omega \phi(x) \int_0^{T-\tau}\frac{u_\tau(t+\tau) - u_\tau(t)}{\tau} \xi(t) \dx t \dx x \\
    &\qquad= \int_\Omega \phi(x) \int_\tau^T  \frac{u_\tau(x,t)}{\tau} \xi(t- \tau) \dx t \dx x - \int_\Omega \phi(x) \int_0^{T-\tau} \frac{u_\tau(x,t)}{\tau} \xi(t) \dx t \dx x\\
    &\qquad= \int_\Omega \phi(x) \int_\tau^{T-\tau} u_\tau \frac{\xi(t-\tau) - \xi(t)}{\tau} \dx t \dx x + \frac1\tau \int_\Omega \phi(x) \int_{T-\tau}^T u_\tau(x,t) \xi(t-\tau) \dx t \dx x \\ 
    &\qquad\qquad - \frac1\tau\int_\Omega \phi(x) \int_0^\tau u_\tau(x,t) \xi(t) \dx t \dx x\\
    &\qquad\longrightarrow -\int_\Omega \phi(x) \int_0^T u(x,t) \xi'(t) \dx t \dx x,
\end{align*}
\textit{i.e.}, $\chi = \partial_t u$ in the sense of distributions. Replacing $u$ by $m$ and setting $d_\tau m_\tau \rightharpoonup \tilde \chi$, following the above argument, we have
\begin{align*}
	\int_0^{T} \tilde \chi(t)  \xi(t) \dx t  = - \int_0^{T} m(t) \xi'(t) \dx t,
\end{align*}
and thus $m' = \tilde \chi \in L^2(0,T; L^{2}(\Omega))$.

Having garnered sufficient compactness and the corresponding convergent subsequences and limits, we can now prove the main result.
\begin{proof}[Proof of Theorem~\ref{thm:main}]
Let us revisit Eq.~\eqref{eq:discrete-u}, \textit{i.e.},
\begin{align*}
    \int_{0}^{T} \int_\Omega d_\tau u_\tau \varphi\, \dx x \dx t &= -\tau \int_{0}^{T} \int_{\Omega}\nabla w \cdot \nabla \varphi + w \varphi \, \dx x \dx t \\
    &\qquad\qquad - \int_{0}^{T} \int_{\Omega} \prt*{(1-m_\tau)\nabla u_\tau +u_\tau \nabla m_\tau} \cdot \nabla \varphi \, \dx x \dx t \\
    &\qquad\qquad + \int_{0}^{T} \int_{\Omega}  u_\tau (1-u_\tau -m_\tau) \varphi\, \dx x \dx t.
\end{align*}
First let us note that the term premultiplied by $\tau$ vanishes due to Corollary~\ref{cor:L2H1-estimate-w}. Next, using the convergences above, we can pass to the limit in the other terms of the equation to get
\begin{align*}
    \int_{0}^{T}\int_\Omega \partial_t u \varphi\, \dx x \dx t 
    &= - \int_{0}^{T}\int_{\Omega} \prt*{(1-m)\nabla u +u \nabla m} \cdot \nabla \varphi \, \dx x \dx t\\
    &\qquad\qquad +\int_{0}^{T}\int_{\Omega} u (1-u -m) \varphi \, \dx x \dx t,
\end{align*}
for any $\varphi \in C_c^\infty(\R^d \times (0,T))$ which is dense in $L^2(0,T;H^1(\Omega))$. Thus, the limit $(u,m)$ is a weak solution to Eq.~\eqref{eq:main-cells} in the sense of Definition~\ref{def:weak-soln}. Similarly, we can pass to the limit in the equation for the ECM, Eq.~\eqref{eq:main-ECM}, in duality with $L^2(0,T;L^2(\Omega))$, \textit{i.e.}, we get
\begin{align*}
    \int_{0}^{T} \int_\Omega \partial_t m \, \varphi  \, \dx x \dx t &= -\lambda \int_0^T \int_\Omega u m \varphi \, \dx x \dx t,
\end{align*}
for any $\varphi \in L^2(0,T;L^2(\Omega))$, having used the same approximation procedure of the test function.

The a priori estimates \eqref{eq:a_priori} follow from passing to the limit in the bounds of Lemmas \ref{lem:L2-gradu} and \ref{lem:L2-gradm}, using the weak lower semicontinuity of the norms.\\
Finally, the compactness is sufficient to conclude that the weak solution satisfy the initial data, using in addition that $m_0 \to m_{\mathrm{in}}$ pointwise as $\tau \to 0$.
\end{proof}

\section{A numerical exploration of large ECM degradation rates}\label{sec:numerics}

In this section, we explore solutions to the system~\eqref{eq:main} numerically, using simulations subject to no-flux boundary conditions and the following initial conditions:
\begin{equation}
    u({\bf{x}},0)=\begin{cases}1, \qquad &$if$ \qquad |{\bf{x}}|<1, \\ 0, \qquad &$if$ \qquad |{\bf{x}}|\geq1,\end{cases}\label{IC_u} 
\end{equation}
\begin{align}
    m({\bf{x}},0)= \begin{cases} 
    0, \qquad &$if$ \qquad |{\bf{x}}|< 1, \\
    m_0, \qquad &$if$ \qquad |{\bf{x}}|\geq 1. \label{IC_m}
    \end{cases} 
\end{align}
    
As in \cite{crossley2023travelling}, the system~\eqref{eq:main} is solved in one dimension on the domain $x\in[0, L]$, where $L\geq200$ is chosen sufficiently largely to ensure a full travelling wave type profile had evolved without the impact of boundary conditions. 
In two dimensions, we solve the system~\eqref{eq:main} numerically on the domain ${\bf{x}}=(x, y)$ where $x,y\in [-5, 5]$.
The spatial discretisation used is $\Delta x = 0.1$, where we employ the method of lines to discretise physical space before integrating in time using Python's built-in integrator \texttt{scipy.intergrate.solve\_ivp}, which uses an explicit Runga-Kutta integration method of order 5 and time step $\Delta t=1.$
{This specification of $\Delta t, \, \Delta x$ satisfies the CFL condition for this system of equations~\eqref{eq:main}, given by
\begin{equation*}
    \Delta t \leq \dfrac{(\Delta x)^2}{2 \max\left(\lvert 1 - u - m \rvert, \lvert u \rvert\right)}.
\end{equation*}}
The discretisation employed at spatial point $i$ is
\begin{equation*}
    \frac{\partial}{\partial x}\bigg[D\frac{\partial a}{\partial x}\bigg]\approx \frac{1}{2(\Delta x)^2}\bigg[(D_{i-1}+D_{i})a_{i-1}-(D_{i-1}+2D_{i}+D_{i+1})a_i+(D_i+D_{i+1})a_{i+1}\bigg].
\end{equation*}

When simulating the system of equations~\eqref{eq:main} on a fixed domain, subject to zero flux boundary conditions and initial conditions~\eqref{IC_u}~and~\eqref{IC_m}, we observe solutions in both one and two spatial dimensions that we have a travelling wave type profile, whose far-field conditions are determined by the steady states of the system of equations~\eqref{eq:main}: 
\begin{equation*}
    (u,\, m) = (1,\, 0), \, (0,\, m_0).
\end{equation*}
In the travelling wave type profile, the cell density, $u$, decreases monotonically from one to zero, as the cells diffuse into available space and proliferate up to the carrying capacity behind the wave, driving invasion. 
The ECM density, $m$, increases monotonically between zero (where it is completely degraded by cells in the same spatial position as the ECM) and $m_0$ (where no degradation has occurred as no cells are present here yet, ahead of the wave).

One key feature of the profiles in cell and ECM densities is the overlap between the wave fronts. 
For low ECM degradation rates, we observe a non-negligible region of overlap in the cell density and ECM profiles. 
Alternatively, as $\lambda$ increases, the very low density of cells at the front of the non-compactly supported cell density profile (in the exponential tail) have a larger impact on the ECM. 
This is because they are able to degrade the ECM locally at a much higher rate and therefore the remaining density of ECM is significantly reduced, resulting in a much smaller overlap in the travelling wave type profiles of cell and ECM densities. 
Indeed, in extremely high ECM degradation rates, a gap appears between the cell and ECM density wave fronts, typical of those observed in acid-mediated tumour invasion models, such as those by Gatenby and Gawlinski and others \cite{gatenby1996reaction, gatenby_acid-mediated_2006, martin2010tumour}. 
It is also clear from Figures~\ref{fig:2_1d},~\ref{fig:2d},~\ref{fig:gaussian2d}~and~\ref{fig:sin1d} that increasing the ECM degradation rate impacts the shape of the ECM density travelling wave type profile. 
As $\lambda$ increases, the ECM profile displays a sharper transition between zero and $m_0$ around the wave front. 

\begin{figure}[htbp]
    \centering
    \includegraphics[width=\linewidth]{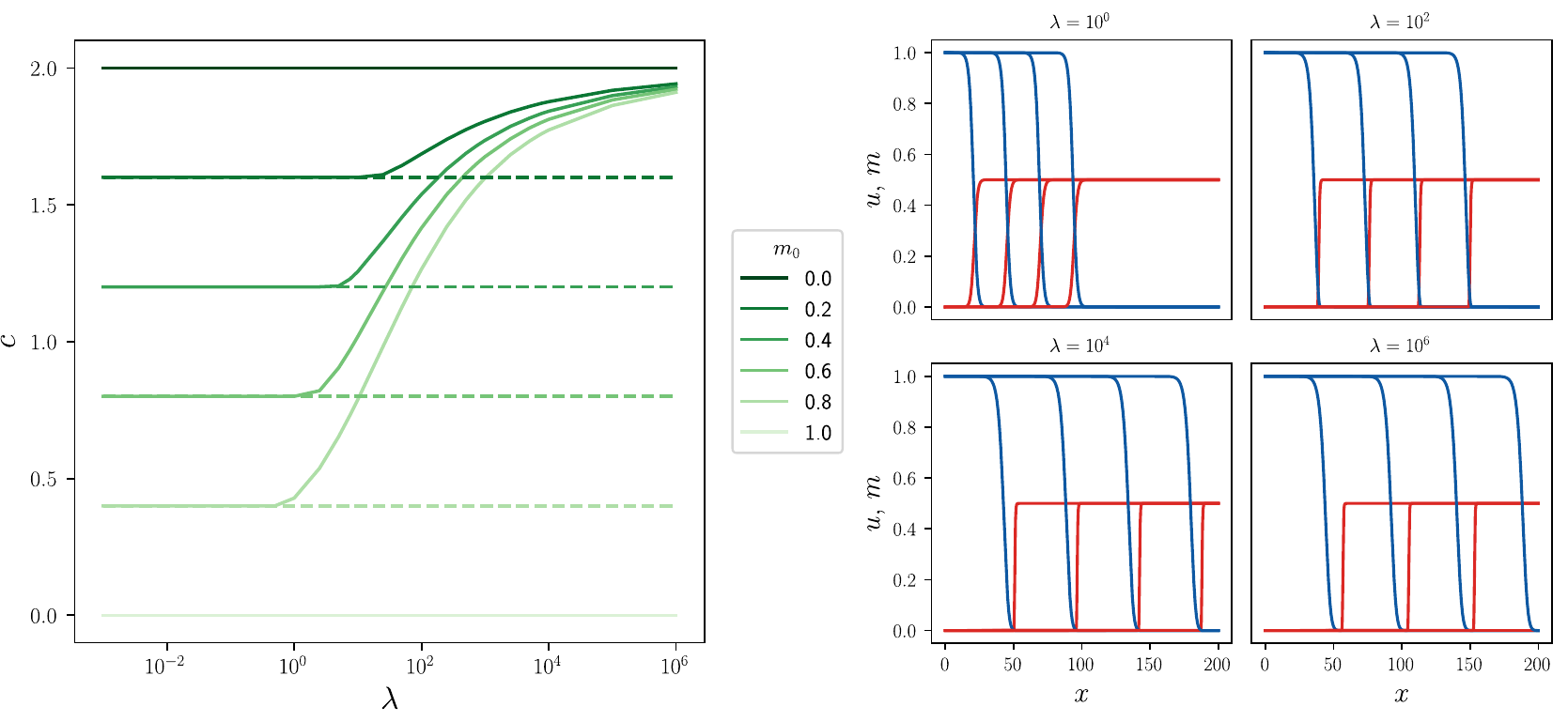}
    \caption{On the left, plot of the relationship between the {ECM degradation rate, $\lambda$, and the} numerically estimated travelling wave speed{, $c$,} (solid lines) of solutions of the system \eqref{eq:main} subject to zero flux boundary conditions and initial conditions~\eqref{IC_u}~and~\eqref{IC_m} {for various initial ECM densities, $m_0$}. The numerically estimated travelling wave speed is obtained by interpolation to find the point $X(t)$ such that $u(X(t), t) = 0.1$ at all times $t\geq0$. The dashed lines indicate the value of the analytically predicted minimum travelling wave speed, $2(1-m_0)$ \cite{crossley2023travelling}.
    On the right, plots of solutions to system \eqref{eq:main} subject to zero flux boundary conditions and initial conditions~\eqref{IC_u}~and~\eqref{IC_m} in one dimension for the cells (blue) and  for the ECM (red), where $m=0.5$. 
    Plots are shown subject to ECM degradation rates $\lambda=10^0,\, 10^2, \, 10^4, \, 10^6$ at 
    times $t=25, \, 50, \, 75\, \text{and}\, 100,$ from left to right.}
    \label{fig:2_1d}
\end{figure}

In Figures~\ref{fig:2_1d} and \ref{fig:2d}, it can be observed that for a constant initial condition for the ECM density, the solutions for the cell and ECM density have a constant speed and constant profile. 
Initially, as the cell density transitions from the initial condition to the travelling wave type profile, we observe a speed that increases with time. 
However, once the cell density reaches its constant, travelling wave type profile, it propagates through the spatial domain with a constant speed. 
At low values of the ECM degradation rate, $\lambda$, the numerically observed travelling wave speed matches that predicted by standard travelling wave analysis, via linearisation of the system of ODEs associated with Eqs.~\eqref{eq:main-cells}~and~\eqref{eq:main-ECM}.
This analytically predicted minimum travelling wave speed, $c_{\text{analytical}}=2\sqrt{(1-m_0)}$, is derived in \cite{crossley2023travelling}, and, importantly, {although describes an explicit dependence on the initial ECM density, it} is independent of the ECM degradation rate, $\lambda$. 
However, the plot on the left in Figure~\ref{fig:2_1d} demonstrates that the speed of invasion of the cells does, in reality, depend on $\lambda$, as supported by the plots on the right in Figure~\ref{fig:2_1d} and Figure~\ref{fig:2d}, which show solutions of Eqs.~\eqref{eq:main-cells}~and~\eqref{eq:main-ECM} for different values of the ECM degradation rate, $\lambda$. 
From this, it is clear that increasing $\lambda$ increases the speed of invasion of the cells, such that the numerically estimated minimum travelling wave speed, $c_{\text{numerical}}\to2^{-}$ as $\lambda\to\infty$.

\begin{figure}[htbp]
    \centering
    \includegraphics[width=\linewidth]{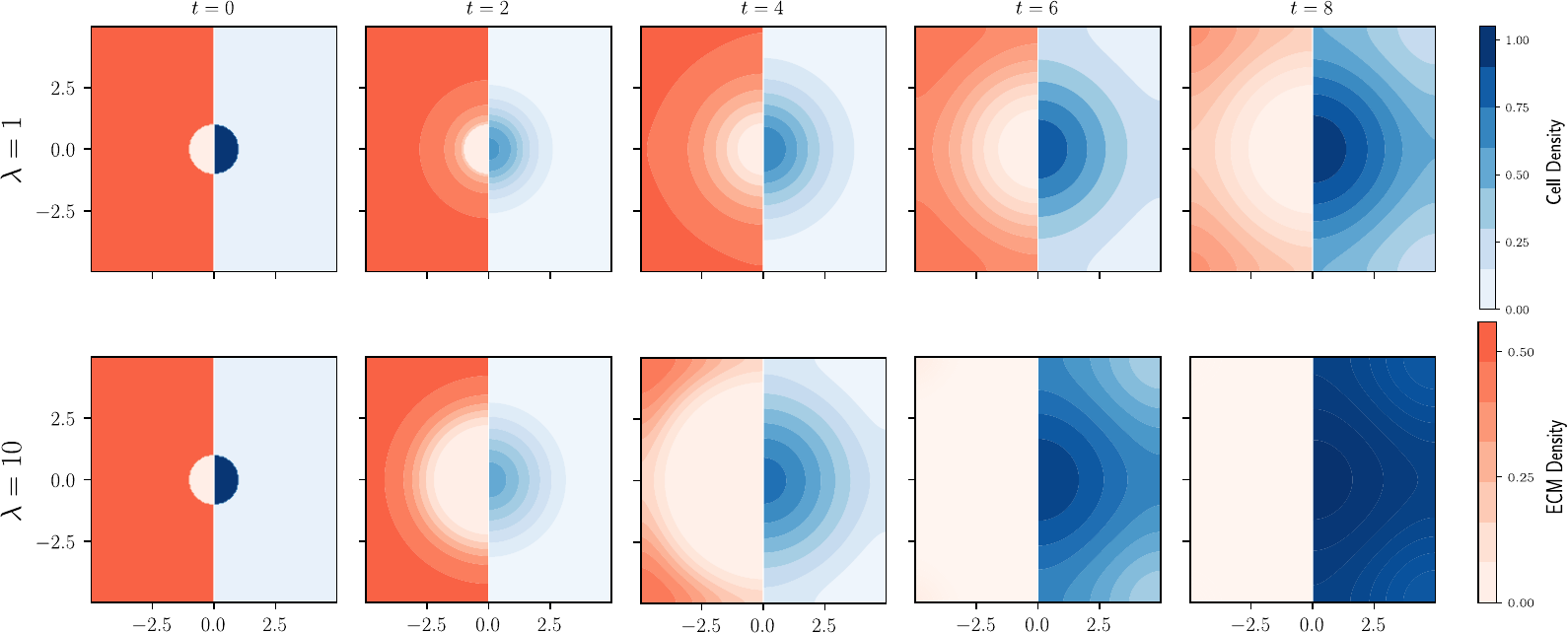}
    \caption{Plots of solutions to system \eqref{eq:main} subject to zero flux boundary conditions and initial conditions~\eqref{IC_u}~and~\eqref{IC_m} for the cells (blue, {plotted only} on the right half of the plots) and  for the ECM (red, {plotted only} on the left half of the plots), where $m_0=0.5$. {Results are symmetric, hence plotted in this way.} Plots are shown subject to ECM degradation rates $\lambda=1,\, 10$ at times $t=0,\,2,\,4,\,6,\,8.$}
    \label{fig:2d}
\end{figure}

The {varying} discrepancy between the analytically predicted minimum travelling wave speed $c_{\text{analytical}}=2\sqrt{(1-m_0)}$ and the numerically observed minimum travelling wave speed, $c_{\text{numerical}}$, {for increasing $\lambda$ (which can be observed on the left in Figure~\ref{fig:2_1d})} is primarily due to the linearisation of the system during travelling wave analysis, which disregards the complexity of the non-linear diffusive terms {and ignores the impact of parameters, such as $\lambda$, which are only present in the coupled equation describing ECM evolution over time}. 
Results like these, where multi-species systems with cross-dependent diffusion show a discrepancy between the analytically predicted and numerically estimated travelling wave speeds are also observed in coupled systems of PDEs such as those {studied} in \cite{crossley2024phenotypic, stepien2018traveling} and is discussed in more detail in \cite{falco2024travelling}. 

\begin{figure}[htbp]
    \centering
    \includegraphics[width=\linewidth]{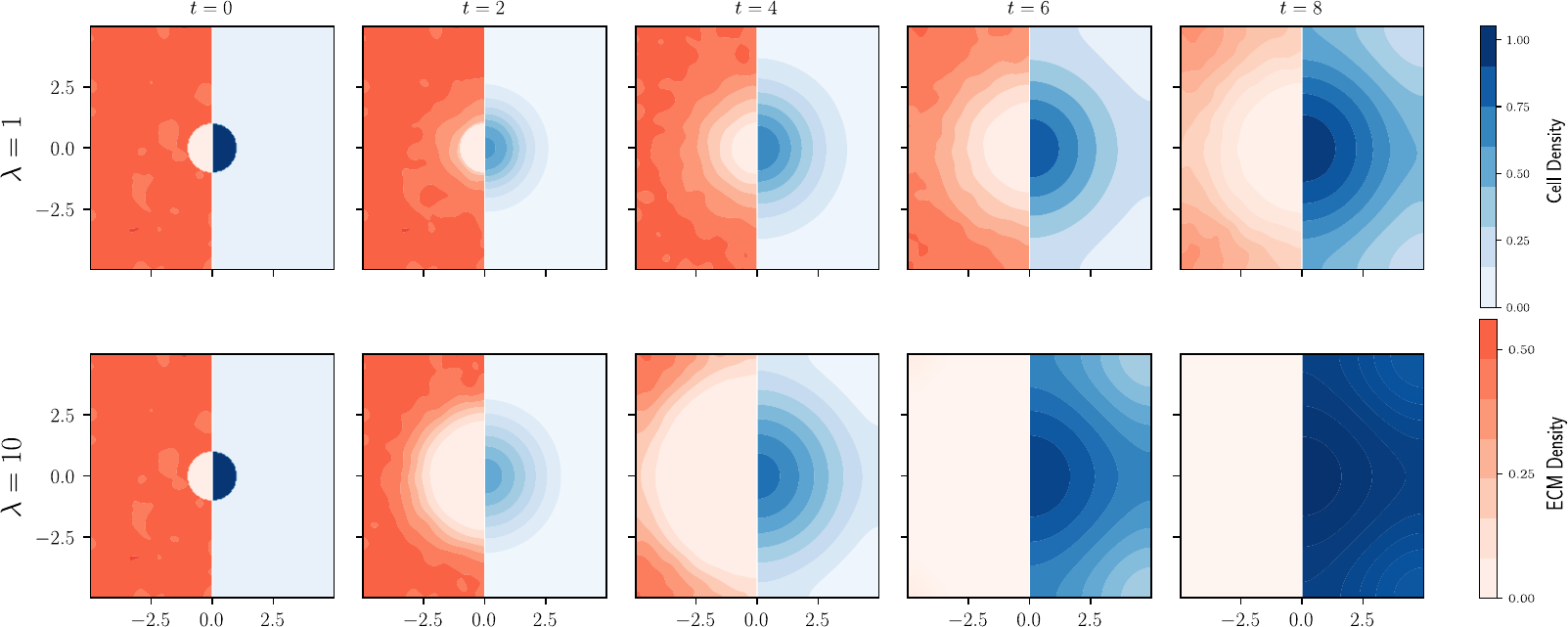}
    \caption{Plots of solutions to system \eqref{eq:main} subject to zero flux boundary conditions and initial conditions for the cells (blue, {plotted only} on the right half of the plots) as in Eq.~\eqref{IC_u} and random initial data for the ECM, smoothed using a Gaussian filter with $\sigma=5$ (red, {plotted only} on the left half of the plots). {Results are symmetric, hence plotted in this way.} Plots are shown subject to ECM degradation rates $\lambda=1,\, 10$ at times $t=0,\,2,\,4,\,6,\,8.$}
    \label{fig:gaussian2d}
\end{figure}

If we instead consider a setup where the initial condition for the ECM is inhomogeneous across the space ahead of the cells, we observe a less smooth profile in the ECM density. 
Consider Figure~\ref{fig:gaussian2d}, where we simulate the system~\eqref{eq:main} with a random initial condition for the ECM, which is smoothed under a Gaussian filter (using \texttt{scipy.ndimage.gaussian\_filter} with $\sigma=5$). 
By examining the ECM (plotted on the left, in red), we can see that small, stochastic differences in the initial ECM density develop into larger regions of disparity in the ECM profile as it is degraded by invading cells, such that the degraded front in the ECM density varies azimuthally during a two dimensional simulation.  
In contrast to this, the radial migration of the cell density ({plotted on the right, in }blue) smooths out the azimuthal variation in the ECM density profile and thus a smooth invading cell population is observed. 
This is likely due to the higher cell densities, and a greater impact can be observed when the ECM degradation rate is larger.
In fact, the travelling wave type profile observed for the cells when the surrounding ECM density is constant (see Figure~\ref{fig:2d}) or random (see Figure~\ref{fig:gaussian2d}) are almost identical for a given value of the ECM degradation rate shown in Figure~\ref{fig:gaussian2d}. 

To investigate whether this averaging out among the invading cell density is always observed, we returned to one dimension and considered an oscillatory initial condition for the ECM. 
When oscillations are small in amplitude or high in frequency, similar behaviour to the random ECM setup in two dimensions was observed - \textit{i.e.}, the cells invaded with a constant speed, constant profile solution, and quickly degraded the ECM. 
In contrast, when we simulated an initial condition for the ECM with a larger amplitude and lower frequency of oscillations, we notice that there were constant profile solutions, however the speed of invasion varied through time and space (see Figure~\ref{fig:sin1d}). 
As the ECM degradation rate increases, {the} differences in the speed of the travelling wave type profiles {decreases, and only marginal changes can be observed over time}.  
However, for small ECM degradation rates {(as shown on the left in Figure~\ref{fig:sin1d})}, we observe a large disparity in the speed of invasion throughout the spatial domain{. A}s the cells move through regions of low ECM density, the speed of invasion increases, and then, in regions of higher ECM density, the speed of invasion decreases again. {The observed speeds in each region match those shown on the left in Figure~\ref{fig:2_1d}.}

\begin{figure}[htbp]
    \centering
    \includegraphics[width=\linewidth]{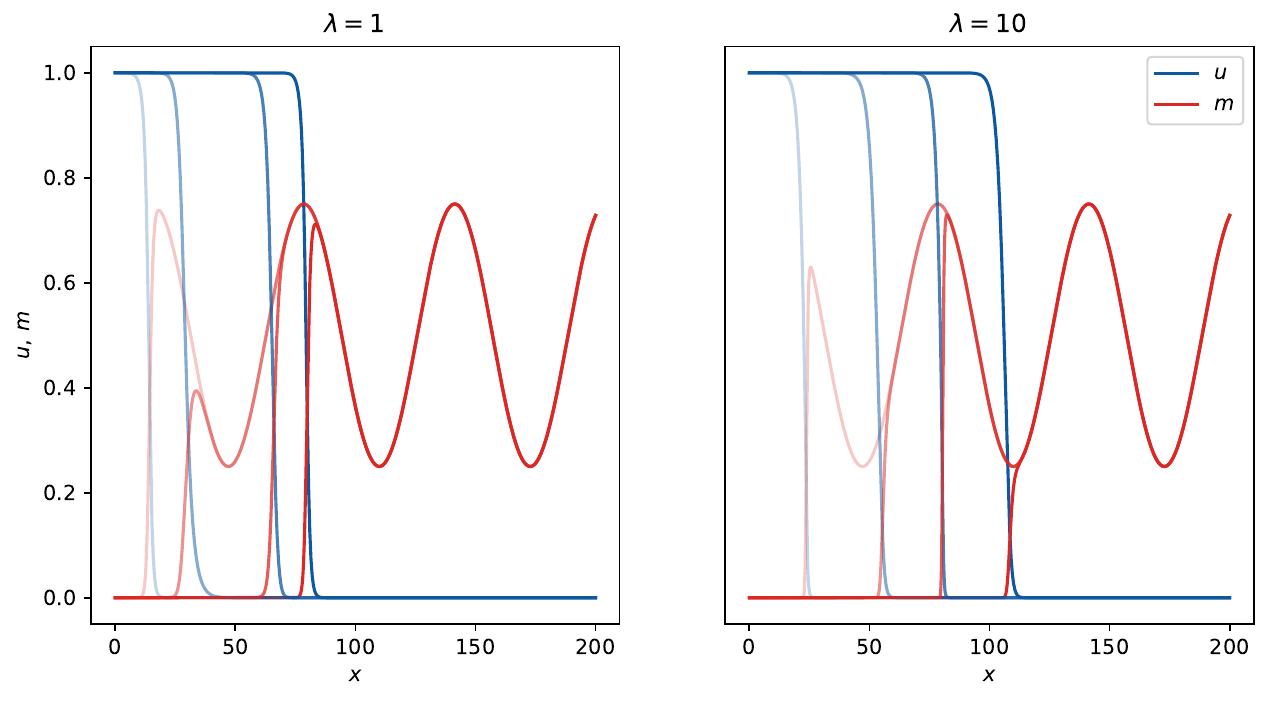}
    \caption{Plots of solutions to system \eqref{eq:main} subject to zero flux boundary conditions and initial conditions for the cells given by Eq.~\eqref{IC_u}, but for the ECM as $m(x,0)=0.5+0.25\,\text{sin}(x/10)$. Solutions are shown in one dimension for the cells (blue) and  for the ECM (red) at 
    times $t=25, \, 50, \, 75\, \text{and}\, 100,$ from left to right. }
    \label{fig:sin1d}
\end{figure}

In future work, the discrepancy between the analytically predicted and numerically estimated minimum travelling wave speed could be investigated, with the aim to uncover the relationship between $m_0$, the ECM density ahead of the invading cells, and $\lambda_c$, the critical value of the ECM degradation rate where $c_{\text{numerical}}>c_{\text{analytical}}$ when $\lambda > \lambda_c$.
It would also be of interest to formally show that solutions to Eqs.~\eqref{eq:main} converge to solutions of the Fisher-KPP equation, with $c=2$, when $\lambda\to\infty.$

\subsection*{Acknowledgements}
RMC is supported by funding from the Engineering and Physical Sciences Research Council (EP/T517811/1) and the Oxford-Wolfson-Marriott scholarship at Wolfson College, University of Oxford.
RMC would also like to thank the Institute of Scientific Computing at the Technische Universit\"at Dresden for their funding and kind hospitality during her collaborative visit, where work on this paper was undertaken.
JFP thanks the DFG for support via the Research Unit FOR 5387 POPULAR, Project No. 461909888.

\bibliographystyle{plain}
\bibliography{arxiv-update}
\end{document}